\DeclareMathOperator{\Pic}{Pic}
\DeclareMathOperator{\Sym}{Sym}
\DeclareMathOperator{\End}{End}
\DeclareMathOperator{\Mor}{Mor}
\DeclareMathOperator{\Hom}{Hom}
\DeclareMathOperator{\rank}{rank}
\renewcommand{\phi}{\varphi}
\newcommand\+{\;\lower\plusheight\hbox{$+$}\;}
\newcommand\lldots{\;\lower\plusheight\hbox{$\cdots$}\;}
\newtheorem{Theorem}{Theorem}[section]
\newtheorem*{theorem*}{Theorem} 
\newtheorem*{Proposition*}{Proposition} 
\newtheorem{Corollary}[Theorem]{Corollary}
\newtheorem*{Corollary*}{Corollary} 
\newtheorem{Lemma}[Theorem]{Lemma}
\newtheorem{Definition}[Theorem]{Definition}
\newtheorem{Remark}[Theorem]{Remark}
\newtheorem{Proposition}[Theorem]{Proposition}
\newdimen\plusheight
\newdimen\minusheight
\newdimen\cdotsheight
\newcommand\blankfootnote[1]{%
	\let\thefootnote\relax\footnotetext{#1}%
	\let\thefootnote\svthefootnote%
}
\title{ Vanishing of Hyperelliptic L-functions at the Central Point}
\author{Wanlin Li}
\address{
	Department of Mathematics, University of Wisconsin\\
	480 Lincoln Drive, Madison, Wisconsin, 53706 USA\\
	E-mail: wanlin@math.wisc.edu}
\begin{document}
\blankfootnote{\textcopyright 2018. This manuscript version is made available under the CC-BY-NC-ND 4.0 license \url{http://creativecommons.org/licenses/by-nc-nd/4.0/}}	
	
	\begin{abstract}
	We obtain a lower bound on the number of quadratic Dirichlet L-functions over the rational function field which vanish at the central point $s = 1/2$. This is in contrast with the situation over the rational numbers, where a conjecture of Chowla predicts there should be no such L-functions. The approach is based on the observation that vanishing at the central point can be interpreted geometrically, as the existence of a map to a fixed abelian variety from the hyperelliptic curve associated to the character.
	\end{abstract}
	\maketitle

	\allowdisplaybreaks
	\numberwithin{equation}{section}
	
	\section{Introduction}
	
	S. Chowla conjectured in \cite{Chowla} that, for any real non-principal Dirichlet character $\chi$, $L(s,\chi) \neq 0$ for all $s \in (0,1)$. In particular, his conjecture asserts that L-functions of quadratic characters never vanish at the central point $s = 1/2$.
	
	Although this conjecture is still open, much progress has been made. K. Soundararajan~\cite{Soundararajan} proved that at least $87.5 \% $ of odd squarefree positive integers $d$ have the property $L(1/2,\chi_{8d}) \neq 0$ where $\chi_{8d}$ denotes the quadratic character with conductor $8d$.
	
	In this paper, we consider the analogue of Chowla's conjecture obtained by replacing the field of rational numbers with the field of rational functions over a finite field.
	
	Let $q = p^e$ be a power of an odd prime $p$ and $\mathbb{F}_q$ the finite field with $q$ elements. Let $k=\mathbb{F}_q(t)$ denote the field of rational functions over $\mathbb{F}_q$. The primes of $k$ are represented by monic irreducible polynomials in $\mathbb{F}_q[t]$ except the one prime at infinity.
	
	A quadratic character of $k$ corresponds to a squarefree polynomial in $\mathbb{F}_q[t]$. Explicitly, take $D \in \mathbb{F}_q[t]$ to be a squarefree polynomial and $K = k(\sqrt{D})$ the quadratic extension of $k$ by joining $\sqrt{D}$. Then we can define a quadratic character $\chi_D$ as follows:
	
	For $P$ a prime of $k$,
	\[
	\chi_D(P) = \begin{cases}
	1&  \text{$P$ splits in $K$}\\
	-1& \text{$P$ is inert in $K$}\\
	0&  \text{$P$ ramifies in $K$}\\
	\end{cases}
	\] 

	We define the L-function associated to $\chi_D$ as 
	\[ L(s, \chi_D) = \prod_{P}(1 - \chi_D(P)|P|^{-s})^{-1} \]
	where the product is taken over the primes represented by polynomials $P$ and $|P| = q ^ {deg P}$.
	
	\begin{Definition}\label{DefinePN}
		Define sets: 
		\begin{align*}
		P(N)& = \{ D \in \mathbb{F}_q[t] : D \text{ monic, squarefree, } |D|<N \}\\
		g(N)& = \{ D \in P(N) : L(1/2, \chi_D) = 0 \}.
		\end{align*}
	\end{Definition}

	\begin{Remark}
		\upshape
		Note that in the definition above, we have restricted ourselves to characters corresponding to monic squarefree polynomials which is half of all quadratic characters. But since we only study the density in this paper, such restriction won't affect our results.
	\end{Remark}
		
	Under this definition, the analogue of Chowla's conjecture states that $g(N)$ is empty for any $N$. There are some results towards this statement. 
	
	Bui and Florea~\cite{BF} showed for a fixed finite field $\mathbb{F}_q$ with odd characteristic, as $N \rightarrow \infty $,
	$$ |g(N)| \ll 0.057N + o(1) $$
	where $N=q^{2n+1}$ for some $n>0$.
	
	The purpose of this paper is to show that the analogue of Chowla's conjecture over $\mathbb{F}_q(t)$ is not correct and to give a lower bound on the number of counterexamples with bounded height.
	
	\begin{Theorem}\label{MainThm}
		Let $q=p^e$ and let $g(N)$ be the set defined in Definition \ref{DefinePN}. For any $\epsilon >0$, there exist nonzero constants $B_\epsilon$ and $N_\epsilon$, such that
		\begin{enumerate}
			\item when $e$ is even, $|g(N)|\geq  B_\epsilon \cdot N^{1/2 - \epsilon} $ for $N > N_\epsilon$.
			\item when $e$ is odd and $q \ne 3$, $|g(N)|\geq  B_\epsilon \cdot N^{1/3 - \epsilon} $ for $N > N_\epsilon$.
			\item when $q = 3$,  $|g(N)|\geq  B_\epsilon \cdot N^{1/5 - \epsilon} $ for $N > N_\epsilon$.
		\end{enumerate}
		In particular, as $N \rightarrow \infty$, $|g(N)|$ approaches infinity.
	
	\begin{Remark}
		\upshape{
		Although Chowla's conjecture is not strictly true over $\mathbb{F}_q(t)$, it may hold for \textit{almost all} quadratic characters, i.e. it may be the case that $|g(N)|/N \rightarrow 0$ as $N \rightarrow \infty$.}
	\end{Remark}
	\end{Theorem}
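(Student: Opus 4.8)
\emph{Plan.} The argument has three moves---reinterpret central vanishing via Frobenius eigenvalues, produce one hyperelliptic ``seed'' curve realizing the relevant eigenvalue, then propagate that seed into a polynomially large family of $D$'s---so I begin with the reinterpretation. For squarefree $D\in\mathbb{F}_q[t]$, write $C_D\colon y^2=D(x)$ for the associated hyperelliptic curve over $\mathbb{F}_q$ and $\alpha_1,\dots,\alpha_{2g}$ for the Frobenius eigenvalues on $H^1(C_D,\mathbb{Q}_\ell)$, so $|\alpha_j|=q^{1/2}$. Comparing Euler products identifies $L(s,\chi_D)$ with the zeta numerator $P_{C_D}(q^{-s})=\prod_j(1-\alpha_jq^{-s})$ up to an elementary factor not vanishing at $s=1/2$; hence $\operatorname{ord}_{s=1/2}L(s,\chi_D)$ equals the multiplicity of $+q^{1/2}$ among the $\alpha_j$. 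In particular, if there is a nonconstant morphism $C_D\to C_0$ onto a fixed curve $C_0/\mathbb{F}_q$ one of whose Frobenius eigenvalues is $+q^{1/2}$, then $L(1/2,\chi_D)=0$: the induced $H^1(C_0)\hookrightarrow H^1(C_D)$ is injective and Frobenius-equivariant. Equivalently, it suffices that $\operatorname{Jac}(C_0)$---hence $\operatorname{Jac}(C_D)$---have a simple isogeny factor with Weil number $q^{1/2}$; this is the geometric reformulation of central vanishing.

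\emph{The seed.} The core of the proof is to exhibit, in each case, a hyperelliptic seed $C_0\colon y^2=D_0(x)$ over $\mathbb{F}_q$ whose zeta numerator has $+q^{1/2}$ as a root, of genus $g_0$ as small as possible (so $\deg D_0\in\{2g_0+1,2g_0+2\}$). When $e$ is even, $q^{1/2}\in\mathbb{Z}$, and by Waterhouse's theorem there is an elliptic curve $E_0/\mathbb{F}_q$ of trace $+2q^{1/2}$; both its Frobenius eigenvalues equal $q^{1/2}$, so a Weierstrass model works with $g_0=1$. When $e$ is odd no elliptic curve suffices (its two eigenvalues multiply to $q$ and cannot both equal the irrational $q^{1/2}$), so $g_0\ge2$; by Honda--Tate the simple abelian variety over $\mathbb{F}_q$ with Weil number $q^{1/2}$ is an abelian surface $A$ with characteristic polynomial $(T^2-q)^2$, and for $q\ne3$ one invokes the classification of which isogeny classes of abelian surfaces over finite fields contain Jacobians to obtain a smooth genus-$2$ curve $C_0/\mathbb{F}_q$ with $\operatorname{Jac}(C_0)$ isogenous to $A$, so $g_0=2$. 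For $q=3$ that isogeny class---and every relevant class of genus $\le3$---contains no Jacobian, so one instead writes down an explicit hyperelliptic genus-$4$ curve $C_0/\mathbb{F}_3$ whose zeta numerator is divisible by $T^2-3$, giving $g_0=4$.

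\emph{The propagation and the count.} Write $D_0(x)=c\prod_{i=1}^{d_0}(x-\rho_i)$ with distinct $\rho_i\in\overline{\mathbb{F}}_q$. For coprime $A,B\in\mathbb{F}_q[t]$ with $\deg A=k$ and $\deg B<k$, put $\widetilde D(t):=D_0\!\bigl(A(t)/B(t)\bigr)\,B(t)^{2g_0+2}=c\,B(t)^{2g_0+2-d_0}\prod_{i=1}^{d_0}\bigl(A(t)-\rho_iB(t)\bigr)$, a polynomial over $\mathbb{F}_q$ of degree at most $(2g_0+2)k$. For $(A,B)$ in a dense open locus $\widetilde D$ is squarefree, and after constraining the leading coefficients of $A$ and $B$ by one relation we may take $\widetilde D$ monic---this must be arranged before forming the curve, lest $C_{\widetilde D}$ be replaced by a quadratic twist, which flips the sign of the eigenvalue we want. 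The substitution $x=A(t)/B(t)$, $y=\widetilde Y\,B(t)^{-(g_0+1)}$ defines a nonconstant degree-$k$ morphism $C_{\widetilde D}\to C_0$, so $L(1/2,\chi_{\widetilde D})=0$ and $\widetilde D\in g(N)$ as soon as $|\widetilde D|<N$. There are $\gg q^{2k}$ admissible normalized pairs $(A,B)$, and $(A,B)\mapsto\widetilde D$ is finite-to-one with fibers of negligible size (a coincidence of values yields several distinct degree-$k$ morphisms $C_{\widetilde D}\to C_0$, finitely many by de Franchis--Severi when $g_0\ge2$ and by a hyperelliptic-involution-compatibility argument when $g_0=1$, and their number is at worst polynomial in $k$), so $\gg q^{(2-o(1))k}$ distinct admissible $\widetilde D$ arise. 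Taking $k=\bigl\lfloor(\log_q N)/(2g_0+2)\bigr\rfloor$ makes $|\widetilde D|\le q^{(2g_0+2)k}\le N$, whence
\[
|g(N)|\ \gg\ q^{(2-o(1))k}\ \gg\ N^{\frac{1}{g_0+1}-\epsilon}.
\]
Substituting $g_0=1,2,4$ gives the three bounds, and $|g(N)|\to\infty$.

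\emph{Main obstacle.} The hard part is the seed construction: realizing the Weil number $q^{1/2}$ by a hyperelliptic Jacobian of the stated small genus. For odd $e$ this rests on the classification of which isogeny classes of abelian varieties over finite fields contain Jacobians, and $q=3$ moreover demands an explicit genus-$4$ example together with the failure of all smaller genera. By comparison the propagation step is routine, apart from tracking the sign of the eigenvalue, the monic normalization, and a uniform bound on the fibers in the count.
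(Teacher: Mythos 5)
Your overall strategy coincides with the paper's: reinterpret $L(1/2,\chi_D)=0$ as the Frobenius eigenvalue $q^{1/2}$ occurring on $J(C_D)$, locate a hyperelliptic seed curve $C_0$ with that eigenvalue of genus $1$, $2$, or $4$ in the three cases (Honda--Tate plus the Howe--Nart--Ritzenthaler classification of Jacobians in isogeny classes of abelian surfaces for odd $e$, $q\ne 3$; an explicit genus-$4$ curve for $q=3$), and then produce many $D$ by pulling $C_0$ back along rational maps $\mathbb{P}^1\to\mathbb{P}^1$, which yields the exponent $1/(g_0+1)$. Both of those moves are correct and match the paper, and you correctly identify the seed construction as the place where nontrivial input (Honda--Tate, the Jacobian classification, and a hand-found $q=3$ example) is needed.

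The gap is in the propagation count, and it is a real one. You assert that ``for $(A,B)$ in a dense open locus $\widetilde D$ is squarefree'' and deduce that there are $\gg q^{2k}$ admissible pairs. Over a fixed finite field $\mathbb{F}_q$ with $k\to\infty$, this inference fails: the squarefree locus is indeed the nonvanishing set of $\operatorname{disc}(\widetilde D)$, but that discriminant has degree growing linearly in $k$ in the roughly $2k$ coefficient variables of $(A,B)$. A Schwartz--Zippel bound on the bad locus is then $\ll k\,q^{2k-1}$, which exceeds $q^{2k}$ as soon as $k\gg q$, so ``dense open'' gives no positive proportion in exactly the regime you need. Controlling the contribution of large primes to the discriminant is precisely the squarefree-sieve problem, and the paper resolves it by invoking Poonen's theorem on squarefree values of multivariable polynomials over $\mathbb{F}_q[t]$ (Proposition 3.5 in the paper, applied to $F(u,v)=v^{2g+2}f(u/v)$ after inverting the small primes to force a nonzero local density). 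Your write-up has no substitute for this step, and without it the claimed $\gg q^{(2-o(1))k}$ count does not follow. Relatedly, your bound on the fibers of $(A,B)\mapsto\widetilde D$ leans on de Franchis--Severi for $g_0\ge 2$ and an unspecified involution-compatibility argument for $g_0=1$; the paper instead gets a clean, uniform $q^{o(k)}$ bound by factoring $F=F_1F_2$ with $\gcd(F_1,F_2)=1$ and applying Bezout plus the divisor bound $d(X)\ll|X|^\epsilon$, and this is exactly why Proposition 3.1 requires the seed to have an odd-degree model or a reducible even-degree model (which the paper verifies for the genus-$2$ seed by computing the Frobenius action on $J(C_0)[2]$). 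Your proposal needs both a genuine sieve input for the squarefree count and a quantitatively effective fiber bound before the final inequality is justified.
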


	\textbf{Outline of paper.} In section 2, we give a geometric interpretation for the vanishing of a quadratic L-function at the central point. In section 3, we show a lower bound on the number of hyperelliptic curves which admit a dominant map to some fixed curve. In section 4, we describe an application of our main theorem to give a lower bound on the number of elliptic curves with elevated ranks in certain quadratic twist families. In section 5, we provide the proof of Theorem \ref{MainThm}. In section 6, we present some of the data we collected using Magma on this problem.
	
	\textbf{Acknowledgments.} I would like to thank my advisor Jordan Ellenberg for bringing this problem to me and his guidance during my work. I would like to thank Dima Arinkin and Melanie Matchett Wood for useful conversations on the subject matter of this paper. I want to also thank Alexandra Florea and Solly Parenti for reading the earlier draft and giving valuable feedbacks. This work was partially supported by NSF-DMS grant 1700884.

	\section{Geometric Interpretation of Vanishing at the Central Point}
	
	Let $D$ be a monic squarefree polynomial over $\mathbb{F}_q$. Then $y^2 = D$ is a hyperelliptic curve defined over $\mathbb{F}_q$ which we denote by $C$ from now on. The field $K = k(\sqrt{D})$ as defined before is the function field of $C$.
	
	Let $P(x) \in \mathbb{Z}[x] $ be the characteristic polynomial of geometric Frobenius acting on the Jacobian $J(C)$.
	
	Then we get 
	\[ P(q^{-s})=(1-q^{-s})^{\lambda_D}L(s, \chi_D) \]
	where 
	\[
	\lambda_D = \begin{cases}
	1& \deg D  \text{ even}\\
	0& \deg D \text{ odd}\\
	\end{cases}
	\] 
	
	By the Riemann Hypothesis for curves over finite fields, we have a factorization
	\[ P(x) = \prod_{j=1}^{2g}(1-x \pi_j) \]
	where $g$ is the genus of $C$ and $\pi_j$ an algebraic integer with $|\pi_j| = q^{1/2}$ under every complex embedding.

	The following lemma is now immediate.
	\begin{Lemma}\label{geometry}
	Let $D$ be a monic squarefree polynomial in $\mathbb{F}_q[t]$ and $\chi_D$ be the quadratic character associated to $D$. Let $C$ be the hyperelliptic curve defined by $y^2 = D$, $P \in \mathbb{Z}[x]$ the characteristic polynomial of geometric Frobenius acting on the Jacobian of $C$ and $\pi_1, \ldots , \pi_{2g}$ the eigenvalues of this action. Then the following statements are equivalent: 
	\begin{itemize}
		\item $L(1/2,\chi_D) = 0$.
		\item $P(q^{-1/2}) = 0$.
		\item $\pi_j = q^{1/2} \text{ for some } j$.
	\end{itemize}

	\end{Lemma}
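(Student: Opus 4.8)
The plan is to obtain all three equivalences directly from the two identities already displayed above: the relation $P(q^{-s}) = (1 - q^{-s})^{\lambda_D} L(s, \chi_D)$ and the factorization $P(x) = \prod_{j=1}^{2g}(1 - x\pi_j)$ coming from the Riemann Hypothesis for curves over finite fields.

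First I would prove that $L(1/2, \chi_D) = 0$ if and only if $P(q^{-1/2}) = 0$. Specializing the first identity to $s = 1/2$ gives $P(q^{-1/2}) = (1 - q^{-1/2})^{\lambda_D}\, L(1/2, \chi_D)$. Since $q = p^e \geq 3$, the number $q^{-1/2}$ is not equal to $1$, so the scalar $(1 - q^{-1/2})^{\lambda_D}$ is a nonzero real number (it is $1$ when $\deg D$ is odd and $1 - q^{-1/2}$ when $\deg D$ is even). Dividing by this scalar gives the equivalence.

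Next I would prove that $P(q^{-1/2}) = 0$ if and only if $\pi_j = q^{1/2}$ for some $j$. Evaluating the factorization at $x = q^{-1/2}$ yields $P(q^{-1/2}) = \prod_{j=1}^{2g}(1 - q^{-1/2}\pi_j)$, a finite product of complex numbers, which vanishes precisely when one of its factors vanishes, i.e.\ when $q^{-1/2}\pi_j = 1$, that is, when $\pi_j = q^{1/2}$ for some $j$. Chaining the two equivalences proves the lemma.

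There is essentially no obstacle here: the statement is a bookkeeping consequence of the setup, and the only point requiring a word is that the correction factor $(1 - q^{-1/2})^{\lambda_D}$ is nonzero, which follows from $q \geq 3$. One could additionally remark that the constraint $|\pi_j| = q^{1/2}$ under every complex embedding means that $\pi_j = q^{1/2}$ forces this eigenvalue to be the positive real square root of $q$ — a "trivial'' zero sitting on the critical line — but this observation is not needed for the equivalence itself.
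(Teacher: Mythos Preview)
Your proposal is correct and is exactly the argument the paper has in mind: the lemma is stated as ``now immediate'' from the two displayed identities, and you have simply spelled out the specialization at $s=1/2$ together with the nonvanishing of $(1-q^{-1/2})^{\lambda_D}$. There is nothing to add.
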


	Algebraic integers with all Archimedean absolute values equal to $q^{1/2}$ are called Weil integers. The theorem of Honda--Tate states every Weil integer is an eigenvalue of the geometric Frobenius acting on some simple abelian variety over $\mathbb{F}_q$.
	
	\begin{Theorem}[Honda--Tate \cite{E}, \cite{Honda}]\label{HondaTate}
		Let $A$ be an abelian variety defined over $\mathbb{F}_q$ and $\pi_A$ an eigenvalue of the geometric Frobenius endomorphism of $A$. The map $A \mapsto \pi_A $ defines a bijection between the $\mathbb{F}_q$-isogeny classes of abelian varieties defined and simple over $\mathbb{F}_q$ and Galois conjugacy classes of Weil integers.
	\end{Theorem}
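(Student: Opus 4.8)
\textit{Proof strategy (sketch).} The statement packages two results of very different flavor, and the plan is to treat them separately: well-definedness together with injectivity of $A \mapsto \pi_A$ on isogeny classes (due to Tate), and surjectivity onto Galois conjugacy classes of Weil integers (due to Honda). For well-definedness I would first invoke Weil's theorem (the Riemann hypothesis for abelian varieties over finite fields): every eigenvalue of geometric Frobenius on $A$ has absolute value $q^{1/2}$ under every complex embedding, so $\pi_A$ is indeed a Weil integer. Since isogenous abelian varieties have the same characteristic polynomial of Frobenius on their $\ell$-adic Tate modules, the set of eigenvalues, and in particular the Galois conjugacy class of any chosen one, is an isogeny invariant; when $A$ is simple, $\End^0(A)$ is a division algebra, $\mathbb{Q}(\pi_A)$ embeds in its center, and the Frobenius characteristic polynomial is a power of the minimal polynomial of $\pi_A$, so a simple $A$ determines a single conjugacy class.

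For injectivity I would use Tate's theorem that for abelian varieties $A,B$ over a finite field the natural map $\Hom(A,B)\otimes\mathbb{Z}_\ell \to \Hom_{\mathrm{Gal}}(T_\ell A, T_\ell B)$ is an isomorphism for $\ell\neq p$. The standard consequences are that $A$ and $B$ are isogenous if and only if their Frobenius characteristic polynomials coincide, and that any $A$ decomposes up to isogeny according to the irreducible factors of that polynomial. Hence a simple $A$ is pinned down up to isogeny by the conjugacy class of $\pi_A$, which gives injectivity and reduces the whole theorem to surjectivity.

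Surjectivity is the hard part, and here I would follow Honda's argument through the theory of complex multiplication. Given a Weil $q$-integer $\pi$, set $L=\mathbb{Q}(\pi)$; one constructs, over a number field, an abelian variety with CM by a CM field containing $L$ with a carefully chosen CM type, and then studies its reduction modulo a prime $\mathfrak{p}$ above $p$. The Shimura--Taniyama formula expresses the Frobenius eigenvalue of the reduction in terms of the CM type and the splitting of $\mathfrak{p}$, and a matching argument shows that some power $\pi^n$ is realized by such a reduction over $\mathbb{F}_{q^n}$. The final step descends from $\mathbb{F}_{q^n}$ back to $\mathbb{F}_q$: if $B/\mathbb{F}_{q^n}$ has Frobenius $\pi^n$, one uses Tate's theorem (a Galois-descent criterion for isogeny classes together with a norm computation on the relevant division algebra) to produce a simple $A/\mathbb{F}_q$ with $\pi_A=\pi$. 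The main obstacles are building the CM lift with exactly the right CM type and carrying out the bookkeeping in the descent from $q^n$ to $q$; the Shimura--Taniyama formula is the essential input that makes everything fit together. Since a complete proof is lengthy, in the paper this theorem is rightly invoked as a black box from \cite{E} and \cite{Honda}.
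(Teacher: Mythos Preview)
The paper does not prove this theorem; it is quoted as a known result with citations to \cite{E} and \cite{Honda}, exactly as you note in your final sentence. Your sketch is a correct outline of the standard Tate--Honda argument found in those references, so there is nothing to compare.
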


	In particular, Honda--Tate guarantees the existence and uniqueness of an isogeny class of simple abelian varieties over $\mathbb{F}_q$ with $q^{1/2}$ being an eigenvalue of the Frobenius. We denote a representative of this class by $A_q$.
	
	Now we want to find hyperelliptic curves whose Jacobians have $q^{1/2}$ as a Frobenius eigenvalue. Any curve $C$ with a nonconstant map to $A_q$ has $A_q$ as an isogeny quotient of $J(C)$, which implies $C$ has $q^{1/2}$ as a Frobenius eigenvalue. A theorem of Tate guarantees the converse also holds.
	
	\begin{Theorem}[Tate \cite{E} , \cite{Mumford}]\label{Tate}
		Let $A$ and $B$ be abelian varieties defined over $\mathbb{F}_q$ and let $f_A$, $f_B \in \mathbb{Z}[T]$ be characteristic polynomials of geometric Frobenius on $A$ and $B$. Then the following are equivalent:
		\begin{itemize}
			\item[1)]  $B$ is $\mathbb{F}_q$-isogenous to a sub-abelian variety of $A$;
			\item[2)]  $f_B \mid f_A$ in $\mathbb{Q}[T]$.
		\end{itemize}
	\end{Theorem}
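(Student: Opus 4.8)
The plan is to prove the two implications separately: $1)\Rightarrow 2)$ is formal, while $2)\Rightarrow 1)$ rests on Tate's theorems on homomorphisms of abelian varieties over finite fields, and in fact the statement is essentially a repackaging of those theorems.

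For $1)\Rightarrow 2)$, suppose $B$ is $\mathbb{F}_q$-isogenous to an abelian subvariety $B'\subseteq A$. I would invoke Poincar\'e's complete reducibility theorem to produce an abelian subvariety $C\subseteq A$ for which the addition map $B'\times C\to A$ is an $\mathbb{F}_q$-isogeny. For any $\ell\nmid q$ an isogeny induces a Frobenius-equivariant isomorphism of rational $\ell$-adic Tate modules, and $T_\ell(B'\times C)=T_\ell B'\oplus T_\ell C$, so $f_A=f_{B'}\cdot f_C$; since $B\sim B'$ we have $f_{B'}=f_B$, whence $f_B\mid f_A$ in $\mathbb{Z}[T]$ and a fortiori in $\mathbb{Q}[T]$.

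For $2)\Rightarrow 1)$, fix $\ell\nmid q$ and set $V_\ell X=T_\ell X\otimes_{\mathbb{Z}_\ell}\mathbb{Q}_\ell$, on which the geometric Frobenius $F$ acts with characteristic polynomial $f_X$. The two inputs from Tate are: (i) $F$ acts semisimply on $V_\ell A$ (hence on $V_\ell B$); and (ii) the natural map $\Hom(B,A)\otimes_{\mathbb{Z}}\mathbb{Q}_\ell\to\Hom_{\mathbb{Q}_\ell[F]}(V_\ell B,V_\ell A)$ is an isomorphism. By (i), writing $f_A=\prod_i p_i^{m_i}$ as a product of distinct monic irreducibles over $\mathbb{Q}_\ell$, the minimal polynomial of $F$ on $V_\ell A$ is $\prod_i p_i$, so $V_\ell A\cong\bigoplus_i(\mathbb{Q}_\ell[T]/(p_i))^{m_i}$ as $\mathbb{Q}_\ell[F]$-modules, and likewise $V_\ell B\cong\bigoplus_i(\mathbb{Q}_\ell[T]/(p_i))^{n_i}$ with $f_B=\prod_i p_i^{n_i}$. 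The hypothesis $f_B\mid f_A$, which persists over $\mathbb{Q}_\ell$, forces $n_i\le m_i$ for every $i$, so there is an injective $\mathbb{Q}_\ell[F]$-linear map $V_\ell B\hookrightarrow V_\ell A$. It then remains to descend this to geometry: the set of injective elements of $\Hom_{\mathbb{Q}_\ell[F]}(V_\ell B,V_\ell A)$ is Zariski-open (nonvanishing of a maximal minor, intersected with this linear subspace) and, by what we just built, nonempty; via (ii) it becomes a nonempty Zariski-open subset of the $\mathbb{Q}_\ell$-vector space $\Hom(B,A)\otimes\mathbb{Q}_\ell$, and since the $\mathbb{Q}$-form $\Hom(B,A)\otimes\mathbb{Q}$ is Zariski-dense there (as $\mathbb{Q}$ is infinite), I can pick $\psi\in\Hom(B,A)\otimes\mathbb{Q}$, and then an integer multiple $\psi_0\in\Hom(B,A)$, inducing an injection on $V_\ell B$. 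Such $\psi_0$ kills no positive-dimensional abelian subvariety of $B$, hence has finite kernel, hence is an isogeny of $B$ onto its image $B':=\psi_0(B)\subseteq A$; thus $B$ is $\mathbb{F}_q$-isogenous to the abelian subvariety $B'$ of $A$.

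The step I expect to be the main obstacle is the two cited theorems of Tate used above — semisimplicity of Frobenius on $V_\ell$ and the isomorphism $\Hom(B,A)\otimes\mathbb{Q}_\ell\xrightarrow{\sim}\Hom_{\mathbb{Q}_\ell[F]}(V_\ell B,V_\ell A)$ — which are genuinely deep and which I would quote rather than reprove. Granting those, the only delicate point in the argument is the final passage from a morphism of $\ell$-adic Tate modules to an honest homomorphism of abelian varieties, and that is handled by combining the openness of the injectivity locus with the density of $\Hom(B,A)\otimes\mathbb{Q}$ in $\Hom(B,A)\otimes\mathbb{Q}_\ell$.
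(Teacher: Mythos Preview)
The paper does not supply its own proof of this theorem; it is quoted as a known result from the references \cite{E} and \cite{Mumford} and used as a black box in the proof of Proposition~\ref{subvariety}. So there is nothing to compare your argument against.

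That said, your proposal is a correct proof. The forward direction is the standard combination of Poincar\'e complete reducibility with multiplicativity of the Frobenius characteristic polynomial under products and its invariance under isogeny. For the reverse direction you correctly isolate the two deep inputs from Tate --- semisimplicity of Frobenius on $V_\ell$ and the isomorphism $\Hom(B,A)\otimes\mathbb{Q}_\ell\xrightarrow{\sim}\Hom_{\mathbb{Q}_\ell[F]}(V_\ell B,V_\ell A)$ --- and the rest is linear algebra. Your Zariski-density manoeuvre to pass from an injective $\mathbb{Q}_\ell[F]$-map to an honest $\psi_0\in\Hom(B,A)$ with finite kernel is valid (the injectivity locus is open and nonempty, and $\Hom(B,A)\otimes\mathbb{Q}$ is Zariski-dense in its $\mathbb{Q}_\ell$-span because $\mathbb{Q}$ is infinite). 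A slightly more streamlined alternative, which is how this corollary is often deduced in the literature, is to decompose $A$ and $B$ up to isogeny as $\prod A_i^{m_i}$ and $\prod A_i^{n_i}$ with the $A_i$ simple and pairwise non-isogenous, note (again via Tate) that distinct $A_i$ have coprime characteristic polynomials, and read off $n_i\le m_i$ directly from $f_B\mid f_A$; this avoids the open/dense argument entirely. Either route has the same substantive content, namely Tate's two theorems, which you rightly flag as results to be cited rather than reproved.
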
 
	
	\begin{Proposition}\label{subvariety}
		Let $C$ be a hyperelliptic curve defined over $\mathbb{F}_q$, then		
		$q^{1/2}$ is an eigenvalue for geometric Frobenius acting on $J(C)$ if and only if $A_q$ is $\mathbb{F}_q$-isogenous to a sub-abelian variety of $J(C)$.
	\end{Proposition}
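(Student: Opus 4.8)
\emph{Proof proposal.}
The plan is to deduce the proposition from the two structural theorems just cited --- Honda--Tate (Theorem \ref{HondaTate}) and Tate (Theorem \ref{Tate}) --- together with Poincar\'e reducibility. Write $P = f_{J(C)}$ for the characteristic polynomial of geometric Frobenius on $J(C)$, and recall that, by the very definition of $A_q$, the number $q^{1/2}$ is one of its Frobenius eigenvalues.

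The ``if'' direction is the easy half, essentially the observation already made in the paragraph preceding the proposition. If $A_q$ is $\mathbb{F}_q$-isogenous to a sub-abelian variety of $J(C)$, then Theorem \ref{Tate} yields $f_{A_q} \mid P$ in $\mathbb{Q}[T]$, which says precisely that the multiset of Frobenius eigenvalues of $A_q$ is contained in that of $J(C)$. In particular $q^{1/2}$, being an eigenvalue of $A_q$, is an eigenvalue of geometric Frobenius on $J(C)$.

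For the ``only if'' direction I would pass to the $\mathbb{F}_q$-isogeny decomposition of $J(C)$ into simple factors, $J(C) \sim \prod_i B_i^{n_i}$, under which $P = \prod_i f_{B_i}^{n_i}$ and the multiset of Frobenius eigenvalues of $J(C)$ is the union of those of the $B_i$, each repeated $n_i$ times. If $q^{1/2}$ is a Frobenius eigenvalue of $J(C)$, it must then be a Frobenius eigenvalue of some simple factor $B_{i_0}$. Since $B_{i_0}$ is simple, its Frobenius eigenvalues form a single Galois conjugacy class of Weil integers, which is therefore the conjugacy class of $q^{1/2}$; by the Honda--Tate bijection this forces $B_{i_0}$ to be $\mathbb{F}_q$-isogenous to $A_q$. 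As $B_{i_0}$ is an isogeny factor of $J(C)$, Poincar\'e reducibility then furnishes a sub-abelian variety of $J(C)$ isogenous to $B_{i_0}$, hence to $A_q$, which is what we want. (Equivalently, one can argue that $q^{1/2}$ being an eigenvalue forces the minimal polynomial of $q^{1/2}$, and with it $f_{A_q}$, to divide $P$, and then apply Theorem \ref{Tate}.)

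I do not anticipate a real obstacle: all the mathematical content is carried by Honda--Tate and Tate. The only point calling for a little care is keeping track of normalizations --- the $L$-function normalization $P(x) = \prod_j (1 - x\pi_j)$ of the characteristic polynomial used earlier versus the convention implicit in Theorem \ref{Tate} --- but since every $\pi_j$ is a nonzero Weil integer this is harmless, and the phrase ``$q^{1/2}$ is an eigenvalue of geometric Frobenius'' is unambiguous in any case.
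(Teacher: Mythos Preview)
Your proof is correct and follows essentially the same approach as the paper's: both decompose $J(C)$ up to isogeny into simple factors, use Honda--Tate to identify the simple factor carrying the eigenvalue $q^{1/2}$ with $A_q$, and invoke Tate's theorem (implicitly or explicitly) for the equivalence with the sub-abelian-variety condition. Your write-up is simply more detailed, treating the two directions separately and making the appeal to Poincar\'e reducibility explicit where the paper leaves it implicit.
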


	\begin{proof}
		By the theorem of Honda--Tate, there is a unique isogeny class of simple abelian varieties over $\mathbb{F}_q$ having $q^{1/2}$ as a Frobenius eigenvalue, i.e. the class containing $A_q$. Since $J(C)$ can be decomposed up to isogeny uniquely as products of simple abelian varieties over $\mathbb{F}_q$, by the theorem of Tate, $q^{1/2}$ being a Frobenius eigenvalue for $J(C)$ is equivalent to $J(C)$ having a simple factor isogenous to $A_q$.
	\end{proof}

	\begin{Proposition}\label{map}
	$L(1/2, \chi_D) = 0$ if and only if the hyperelliptic curve $C: y^2 = D$ admits a nontrivial map to $A_q$.
	\end{Proposition}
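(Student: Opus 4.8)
The plan is to read off the statement from Lemma~\ref{geometry} and Proposition~\ref{subvariety} together with the universal property of the Jacobian. Those two results already show that $L(1/2,\chi_D)=0$ is equivalent to $A_q$ being $\mathbb{F}_q$-isogenous to a sub-abelian variety of $J(C)$, so it suffices to prove that this happens if and only if $C$ admits a nonconstant morphism to $A_q$. Two standard tools will do the work: Poincar\'e complete reducibility, which lets us pass freely between ``sub-abelian variety of $J(C)$'' and ``isogeny quotient of $J(C)$''; and the fact that $(J(C),\iota_{P_0})$ is the Albanese variety of $C$, i.e.\ every morphism from $C$ to an abelian variety that kills a chosen base point $P_0$ factors uniquely through the Abel--Jacobi map $\iota_{P_0}\colon C\to J(C)$, $P\mapsto[P-P_0]$. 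For this to make sense over $\mathbb{F}_q$ I need an $\mathbb{F}_q$-rational point on $C$: since $D$ is monic its leading coefficient is a square, so the point(s) of $C$ above $t=\infty$ are $\mathbb{F}_q$-rational in both the even- and odd-degree cases. (More generally F.~K.~Schmidt's theorem provides a degree-one divisor class over $\mathbb{F}_q$, which is all that the Albanese property requires.) Fix such a $P_0$ and write $\iota=\iota_{P_0}$.

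For the ``if'' direction, suppose $f\colon C\to A_q$ is nonconstant. Translating by $-f(P_0)$ we may assume $f(P_0)=0$, and then $f$ factors as $C\xrightarrow{\iota}J(C)\xrightarrow{h}A_q$ with $h$ a homomorphism of abelian varieties. Since $f$ is nonconstant $h\ne 0$, so its image is a nonzero abelian subvariety of $A_q$; because $A_q$ is \emph{simple}, $h$ must be surjective, exhibiting $A_q$ as an isogeny quotient of $J(C)$. By Poincar\'e complete reducibility $A_q$ is then $\mathbb{F}_q$-isogenous to a sub-abelian variety of $J(C)$, and hence $L(1/2,\chi_D)=0$.

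For the ``only if'' direction, suppose $A_q$ is $\mathbb{F}_q$-isogenous to a sub-abelian variety $B\subseteq J(C)$. By Poincar\'e complete reducibility $J(C)$ is $\mathbb{F}_q$-isogenous to $B\times B'$ for a complementary abelian subvariety $B'$; projecting onto the $B$-factor and composing with an isogeny $B\to A_q$ produces a surjective homomorphism $\psi\colon J(C)\to A_q$. Then $\psi\circ\iota\colon C\to A_q$ is the desired morphism: it is nonconstant because $\iota(C)$ generates $J(C)$ as an algebraic group, so $\psi(\iota(C))$ generates $\psi(J(C))=A_q\ne 0$. This establishes the equivalence, hence the proposition.

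I expect the content of the argument to be entirely formal once the geometric dictionary is in place; the only points needing care are keeping everything over the ground field $\mathbb{F}_q$ rather than over $\overline{\mathbb{F}_q}$ — this is exactly where the monic hypothesis (equivalently, the existence of an $\mathbb{F}_q$-point, or merely a degree-one divisor class, on $C$) is used — and the role of the simplicity of $A_q$ in the ``if'' direction, without which one would only conclude that some nonzero abelian subvariety of $A_q$, rather than $A_q$ itself, is an isogeny factor of $J(C)$.
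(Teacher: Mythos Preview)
Your proof is correct and follows essentially the same route as the paper: both reduce via Lemma~\ref{geometry} and Proposition~\ref{subvariety} to the equivalence between ``$A_q$ is an isogeny factor of $J(C)$'' and ``$C$ admits a nonconstant map to $A_q$'', and both directions are handled through the Albanese property together with the simplicity of $A_q$ and the fact that the image of $C$ generates $J(C)$. The only technical difference is in the choice of embedding $C\hookrightarrow J(C)$: you use the standard Abel--Jacobi map $P\mapsto[P-P_0]$ based at an $\mathbb{F}_q$-point (supplied by the monic hypothesis or by Schmidt's theorem), whereas the paper uses the canonical-class map $P\mapsto(2g-2)P-\omega_C$, which is automatically $\mathbb{F}_q$-rational without reference to a base point; your choice is arguably cleaner since the paper's map degenerates to a constant when $g=1$.
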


	\begin{proof}
		By Lemma \ref{geometry} and Proposition \ref{subvariety}, $L(1/2,\chi_D)=0$ if and only if $A_q$ is $\mathbb{F}_q$-isogenous to a sub-abelian variety of $J(C)$.
		
		Thus, equivalently there is a dominant map $J(C) \rightarrow A_q$ over $\mathbb{F}_q$.
		
		And as long as we have a map $C \rightarrow J(C)$ over $\mathbb{F}_q$ such that the image doesn't lie in any coset of the kernel of the projection to $A_q$, the composition gives a nonconstant morphism from $C$ to $A_q$.
		
		To construct such a map, we just need to take the canonical class $\omega_C$ and define 
		\begin{align*}
		 C& \rightarrow J(C) \\
		 P& \mapsto (2g-2)P-\omega_C 
		\end{align*}

		Then the image of $C \left( \overline{\mathbb{F}}_q \right) $ under this map generates $J(C)$ as a group. Thus it is not contained in the kernel of $J(C) \rightarrow A_q$ and intersect the kernel non-trivially. This shows the existence of a nontrivial map from $C$ to $A_q$.
		
		Conversely, if there exists a nontrivial map from $C$ to $A_q$, it factors through the Albanese variety of $C$ which is the dual abelian variety of $J(C)$. Since Jacobian varieties are self-dual, this induces a nontrivial map from $J(C)$ to $A_q$. Since $A_q$ is $\mathbb{F}_q$-simple, this map is surjective. This implies that the map from $C$ to $A_q$ is surjective as desired.
	\end{proof}

	 Proposition \ref{map} supplies a geometric condition equivalent to our algebraic statement $L(1/2, \chi_D) = 0$. All we need is to use this geometric condition to construct desired polynomials $D$.

	\section{Maps Between Hyperelliptic Curves}
	
	In this section, we prove the following result which provides a lower bound for the number of hyperelliptic curves of bounded genus covering a fixed hyperelliptic curve over a finite field of odd characteristic.
	
	\begin{Proposition}\label{MainProp}
		Let $C_0$ be a hyperelliptic curve of genus $g$ defined over $\mathbb{F}_q$ where $q$ is odd. Assume the existence of a defining equation of $C_0$ as $y^2=f(x)$ where $\deg f = 2g+2$ and $f$ is reducible or $\deg f=2g+1$ and $f$ need not to be reducible.
		Then for any $\epsilon>0$, there exist positive constants $B_\epsilon$ and $N_\epsilon$ such that the number of polynomials $D \in \mathbb{F}_q[t]$ satisfying
		\begin{itemize}
			\item $|D| < N$
			\item Curve $C : s^2 = D(t)$ admits a dominant map to $C_0$
		\end{itemize}
		is at least $B_\epsilon \cdot N^{\frac{1}{g+1} - \epsilon}$ for $N>N_\epsilon$.
	\end{Proposition}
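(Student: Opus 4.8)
The plan is to realize curves covering $C_0$ as pullbacks of the double cover $C_0\to\mathbb{P}^1_x$ under rational maps $\phi\colon\mathbb{P}^1_t\to\mathbb{P}^1_x$. Write the given model as $y^2=f(x)$ and let $G$ be the binary form of degree $2g+2$ given by the homogenization of $f$ when $\deg f=2g+2$ and by $Y$ times the homogenization of $f$ when $\deg f=2g+1$; the hypothesis on $f$ says precisely that $G$ is reducible over $\mathbb{F}_q$ (it has the factor $Y$ in the odd-degree case). For coprime $a,b\in\mathbb{F}_q[t]$ with $\max(\deg a,\deg b)=n$ and $\phi=a/b$ such that $f(\phi)$ is not a square in $\mathbb{F}_q(t)$, the fiber product $C_0\times_{\mathbb{P}^1_x,\phi}\mathbb{P}^1_t$ is an irreducible curve with function field $\mathbb{F}_q(t)(\sqrt{f(\phi(t))})$, whose smooth projective model is the curve $s^2=D_\phi(t)$ with $D_\phi$ the squarefree part of $G(a(t),b(t))$; the projection to $C_0$ is dominant because $\phi$ is nonconstant. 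So each such pair $(a,b)$ yields a polynomial $D_\phi$ satisfying the two bulleted conditions as soon as $|D_\phi|<N$, and it remains to produce many distinct $D_\phi$.

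Next I would record the height bound: each monomial $a^{i}b^{\,2g+2-i}$ has degree at most $(2g+2)n$, so $\deg D_\phi\le(2g+2)n$ and $|D_\phi|\le q^{(2g+2)n}$. Given $N$, take $n=\lfloor\log_q N/(2g+2)\rfloor$, so that every such $D_\phi$ has $|D_\phi|<N$; the rounding costs only a bounded factor. There are $\gg q^{2n+1}$ coprime pairs $(a,b)$ with $\max(\deg a,\deg b)=n$ up to scaling, and since $q^{2n}\ge q^{-2}N^{1/(g+1)}$, it suffices to show that at least $q^{(2-\epsilon)n}$ of the $D_\phi$ are distinct; this gives the asserted bound after renaming $\epsilon$.

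The crux is to control the fibers of the assignment $(a,b)\mapsto D_\phi$. Over $\overline{\mathbb{F}}_q$ write $G(a(t),b(t))=c\prod_{i=1}^{2g+2}\ell_i(a(t),b(t))$ with the $\ell_i$ pairwise non-proportional linear forms (one of them being $Y$, i.e.\ $b$, in the odd case), so the $\ell_i(a,b)$ are pairwise coprime because $\gcd(a,b)=1$. First discard the pairs with $f(\phi)$ a square: this forces every $\ell_i(a,b)$ to be a constant times a square, a locus of positive codimension — and it is avoided outright whenever $G$ has a linear factor over $\mathbb{F}_q$ (e.g.\ always in the odd case), by requiring the corresponding polynomial $\ell_i(a,b)$ to be nonsquare (in the odd case, $b$ squarefree of positive degree). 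Next discard the pairs for which $\prod_i\ell_i(a,b)$ has square part of degree exceeding $\epsilon n$: since ``$w^2$ divides $\prod_i\ell_i(a,b)$'' imposes $2\deg w$ independent linear conditions on $(a,b)$, summing over $w$ shows these number $O(q^{\,2n+2-\epsilon n})$, hence are negligible. For the remaining pairs, $D_\phi$ equals, up to a constant, a product of $2g+2$ pairwise coprime squarefree polynomials $\operatorname{rad}(\ell_i(a,b))$, each of degree in $[(1-\epsilon)n,n]$; I would argue via unique factorization that $D_\phi=D_{\phi'}$ forces these $2g+2$ factors to match up, giving $\ell_i(a,b)=\lambda_i\,\ell_{\sigma(i)}(a',b')\cdot(\text{square of degree}<\epsilon n)$ for a permutation $\sigma$, after which any two of these relations pin down $(a',b')$ up to $q^{O(\epsilon n)}$ choices (together with the finitely many $\sigma$ and the bounded contribution of $\operatorname{Aut}(C_0)$ acting through $\mathrm{PGL}_2$ on $\mathbb{P}^1_x$). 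Hence the fibers have size $q^{O(\epsilon n)}$ and the number of distinct $D_\phi$ is $\gg q^{2n+1}/q^{O(\epsilon n)}\ge q^{(2-\epsilon')n}$.

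The step I expect to be genuinely delicate is this last one: showing that only $q^{o(n)}$ of the rational maps $\phi$ pull $C_0$ back to any one curve. Conceptually this is a de Franchis--Severi finiteness statement — a fixed curve of genus $\ge 2$, or its elliptic or abelian quotient in the small-genus applications, receives only boundedly many maps of bounded degree from any fixed source — but making it effective and uniform as the source varies in genus $\sim n$, and working over a fixed finite field (where ``$\phi$ unramified over the branch locus of $C_0$'' is a codimension-one but high-degree condition, so one cannot merely invoke genericity), is where the real work and the $N^{\epsilon}$ loss lie.
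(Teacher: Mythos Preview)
Your strategy matches the paper's: realize covers of $C_0$ as pullbacks along $\phi=a/b$, bound $\deg D_\phi\le(2g+2)n$, then control the fibers of $(a,b)\mapsto D_\phi$. The execution differs in both nontrivial steps, and you have misjudged where the work lies.

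For the sieve, the paper does not argue directly that large square parts are rare; it invokes Poonen's theorem on squarefree values of multivariable polynomials (after inverting the finitely many small primes so that every local factor $1-c_p/|p|^4$ is nonzero), obtaining a positive proportion of $(u,v)$ with $F(u,v)$ squarefree outright. Your hands-on argument can be salvaged here precisely because $G$ splits into \emph{linear} forms over $\overline{\mathbb{F}}_q$: reduce to ``some $\ell_i(a,b)$ has a square factor of degree $\ge\epsilon n/(2g+2)$'', which is a one-variable tail estimate. But ``$w^2\mid G(a,b)$ imposes $2\deg w$ independent linear conditions on $(a,b)$'' is not correct as stated and needs this reduction; summing naively over $w$ up to degree $(g+1)n$ diverges.

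For the fiber bound --- which you flag as the delicate step --- the paper's argument is short and is exactly where the reducibility hypothesis enters. Fix a nontrivial $\mathbb{F}_q$-factorization $F=F_1F_2$ (coming from the reducibility of $f$, or from the extra factor $v$ in the homogenization when $\deg f=2g+1$). If $F(u,v)=a^2D$ with $a$ a unit in the localization, then $F_1(u,v)=D_1$, $F_2(u,v)=D_2$ for one of the $d(a^2D)=N^{o(1)}$ decompositions $D_1D_2=a^2D$, and B\'ezout bounds the solutions of each such system by $(\deg F_1)(\deg F_2)\le n^2$. So the fiber size is $O(N^\epsilon)$ from the divisor bound plus B\'ezout --- no de Franchis--Severi, no uniformity issues as the source varies. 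Your alternative via the full $\overline{\mathbb{F}}_q$-factorization into linear forms also gives the bound, but the claim that ``unique factorization forces the $2g+2$ factors $\operatorname{rad}(\ell_i(a,b))$ to match up with those for $(a',b')$'' is false as stated: there are many ordered coprime $(2g+2)$-factorizations of $D$. What saves the count is that their number is $(2g+2)^{\omega(D)}\le d(D)^{O(1)}=|D|^{o(1)}$, after which any two linear relations determine $(a',b')$ up to the small square parts. Either way the fiber bound is combinatorics, not geometry.
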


	The restriction on the form of the defining equation of $C_0$ is only used for the proof of Proposition \ref{MainProp}. Lemma \ref{CtoE} and \ref{C1toC2} hold for general hyperelliptic curves.

	The proposition is based on two lemmas relating maps between hyperelliptic curves to maps from $\mathbb{P}^1$ to $\mathbb{P}^1$. The treatment is slightly different when the base curve is an elliptic curve and when the base curve has higher genus, we treat the two cases separately, in Lemma \ref{CtoE} and Lemma \ref{C1toC2} respectively.

	\begin{Lemma}\label{CtoE}
		Let $\phi : C \rightarrow E $ be a dominant map from a hyperelliptic curve to an elliptic curve over a field $k$ where char $k \ne 2$. Let $C/ \iota_C$ be the degree $2$ map from $C$ to $\mathbb{P}^1$ induced by the hyperelliptic involution and $E/ [-1]$ be the degree $2$ map from $E$ to $\mathbb{P}^1$ induced by the elliptic involution. Then there exists a dominant map $\psi :C \rightarrow E  $ together with a map $ h(x) : \mathbb{P}^1 \rightarrow \mathbb{P}^1$ and a point $R \in E(k)$ such that the following diagram commutes. 
		
		\[\begin{tikzcd}
		C \arrow{r}[above]{C/ \iota_C} \arrow{d}{2\phi} \arrow[bend right]{dd}[left]{\psi}
		& \mathbb{P}^1 \arrow[dd,dotted, "h"] \\
		E \arrow[d, "+R"]\\
		E \arrow{r}[above]{E/ [-1]}
		& \mathbb{P}^1
		\end{tikzcd}\]
		
	\end{Lemma}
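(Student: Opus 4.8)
The plan is to modify $\phi$ into a map $\psi$ that is \emph{odd} for the hyperelliptic involution, meaning $\psi\circ\iota_C=[-1]\circ\psi$; for such a $\psi$ the composite $(E/[-1])\circ\psi$ is automatically $\iota_C$-invariant and hence descends along $C/\iota_C$, which is exactly the statement that the square in the diagram commutes.

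\emph{Step 1: symmetrize.} First I would consider the morphism $\sigma\colon C\to E$ given by $\sigma=\phi+(\phi\circ\iota_C)$, where the sum uses the group law on $E$; concretely $\sigma$ is the composite $C\xrightarrow{(\mathrm{id},\,\iota_C)}C\times C\xrightarrow{\phi\times\phi}E\times E\xrightarrow{+}E$, so it is a morphism defined over $k$. Since $\iota_C^2=\mathrm{id}_C$, we get $\sigma\circ\iota_C=\sigma$, so $\sigma$ is $\iota_C$-invariant and therefore factors through the quotient $C/\iota_C$, whose base change to $\overline k$ is $\mathbb{P}^1$. As every morphism $\mathbb{P}^1\to E$ is constant, $\sigma$ is constant, and since it is defined over $k$ its value is a point $c\in E(k)$. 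In other words $\phi(x)+\phi(\iota_C x)=c$ identically.

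\emph{Step 2: define $\psi$ and check oddness.} Next, set $R=-c\in E(k)$ and $\psi=(+R)\circ[2]\circ\phi$, which is precisely the composite $\psi=(+R)\circ(2\phi)$ appearing in the diagram. It is dominant, because $[2]$ is a separable isogeny of degree $4$ (here $\mathrm{char}\,k\neq 2$ is used), $\phi$ is dominant, and $+R$ is an isomorphism. Using $\phi(\iota_C x)=c-\phi(x)$ one computes $\psi(\iota_C x)=2\phi(\iota_C x)+R=2c-2\phi(x)-c=-(2\phi(x)-c)=-\psi(x)$, i.e. $\psi\circ\iota_C=[-1]\circ\psi$.

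\emph{Step 3: descend to obtain $h$.} Since $E/[-1]$ identifies each point with its inverse, $(E/[-1])\circ[-1]=E/[-1]$, hence $(E/[-1])\circ\psi\circ\iota_C=(E/[-1])\circ[-1]\circ\psi=(E/[-1])\circ\psi$; thus $(E/[-1])\circ\psi\colon C\to\mathbb{P}^1$ is $\iota_C$-invariant. It therefore factors uniquely through the quotient map $C/\iota_C\colon C\to\mathbb{P}^1$, say as $h\circ(C/\iota_C)$ for a morphism $h\colon\mathbb{P}^1\to\mathbb{P}^1$, necessarily defined over $k$ (by uniqueness of the factorization together with Galois equivariance of all maps involved, or because $C/\iota_C$ is a geometric quotient) and nonconstant since $\psi$ is dominant and $E/[-1]$ is nonconstant. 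Then $\psi$, $h$, and $R$ make the diagram commute, which finishes the proof. There is no serious obstacle here; the only points requiring care are the standard fact that $\mathrm{Hom}(\mathbb{P}^1,E)$ consists of constants (so that $\sigma$ is constant) and the bookkeeping of fields of definition — that $c$, hence $R$, and $h$ are $k$-rational. Conceptually the lemma just records that replacing $\phi$ by its odd part $2\phi-c$ forces it to descend along the two hyperelliptic/$[-1]$ double covers.
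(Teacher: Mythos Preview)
Your proof is correct and follows essentially the same approach as the paper: both identify the constant $c=\phi+\phi\circ\iota_C$ and set $\psi=2\phi-c$, then verify $\psi\circ\iota_C=[-1]\circ\psi$. The only cosmetic difference is in justifying that $\phi+\phi\circ\iota_C$ is constant---the paper appeals to the linear equivalence $P+\overline{P}\sim P'+\overline{P'}$ on $C$ (pushed forward to $E$), whereas you factor through $C/\iota_C\cong\mathbb{P}^1$ and invoke that every map $\mathbb{P}^1\to E$ is constant---but these are two phrasings of the same fact, and your write-up is in fact more complete (explicitly addressing dominance of $\psi$, the descent producing $h$, and $k$-rationality).
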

	
	\begin{proof}
		
		Take any point $P$ on $C$ and denote $\overline{P} = \iota_C(P)$; then $P + \overline{P}$ is linearly equivalent to $P' + \overline{P'}$ for any point $P'$ on $C$.
		
		We have $$\phi(P)+\phi(\overline{P}) = \phi(P') + \phi(\overline{P'}) = R$$ where  $R$ is a $k$-point of $E$.
		
		Define $\psi$ by the rule $\psi(P) = 2\phi(P)-R$. 
		
		Thus $$\psi (P) + \psi (\overline{P}) = 2 \phi(P) - R + 2\phi(\overline{P}) - R =O $$ which means it is equivariant for the two involutions as desired.
		
	\end{proof}

		\begin{Lemma}\label{C1toC2}
		Let $C_1$ and $C_2$ be hyperelliptic curves with genus greater than $1$ over a field $k$ where char $k \ne 2$. Let $\psi : C_1 \rightarrow C_2$ be a dominant map from $C_1$ to $C_2$. Then there exists a rational function $h$ over $k$ such that the following diagram commutes:
		
		\[\begin{tikzcd}
		C_1 \arrow{r}[above]{C_1/ \iota_1} \arrow{d}{\psi}
		& \mathbb{P}^1 \arrow[d,dotted, "h"] \\
		C_2 \arrow{r}[above]{C_2/ \iota_2}
		& \mathbb{P}^1
		\end{tikzcd}\]
		
		where $\iota_1$ and $\iota_2$ are the hyperelliptic involutions on $C_1$ and $C_2$.
	\end{Lemma}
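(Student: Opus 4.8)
The plan is to exploit the fact that, for a hyperelliptic curve $C$ of genus $g \geq 2$ over a field of characteristic $\neq 2$, the hyperelliptic involution $\iota_C$ acts as $-1$ on the space $H^0(C,\Omega^1_C)$ of global $1$-forms — a standard fact (an $\iota_C$-invariant holomorphic $1$-form would descend to one on the genus-$0$ quotient $C/\iota_C$, of which there are none). Consequently the canonical morphism $\kappa_C \colon C \to \mathbb{P}^{g-1} = \mathbb{P}\bigl(H^0(C,\Omega^1_C)^{\vee}\bigr)$ satisfies $\kappa_C \circ \iota_C = \kappa_C$, since acting by $-1$ on a vector space is the identity on its projectivization; hence $\kappa_C$ factors through $C/\iota_C$, and the induced map $C/\iota_C = \mathbb{P}^1 \to \mathbb{P}^{g-1}$ is the rational normal curve of degree $g-1$, in particular an embedding. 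Granting this, the lemma reduces to showing that the morphism $(C_2/\iota_2) \circ \psi \colon C_1 \to \mathbb{P}^1$ is invariant under $\iota_1$: once that is known, it factors uniquely through $C_1/\iota_1$, and the resulting $h \colon \mathbb{P}^1 \to \mathbb{P}^1$ — a rational function, automatically defined over $k$ because every object and arrow in sight is — makes the square commute.

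To prove that invariance, I would first reduce to the case that $\psi$ is separable: in positive characteristic, factor $\psi = \psi_s \circ F$ with $F \colon C_1 \to C_1'$ purely inseparable (a power of the relative Frobenius, so $C_1'$ is again hyperelliptic of the same genus, $F$ carries $\iota_1$ to the hyperelliptic involution of $C_1'$, and $F$ descends to a power of Frobenius on $\mathbb{P}^1$) and $\psi_s$ separable; it then suffices to treat $\psi_s$. So assume $\psi$ separable, whence $\psi^{\ast} \colon H^0(C_2,\Omega^1_{C_2}) \hookrightarrow H^0(C_1,\Omega^1_{C_1})$ is injective. The crux is that for any morphism $f \colon C_1 \to C_2$ the composite $\kappa_{C_2} \circ f$ is, in the coordinates on $\mathbb{P}^{g_2 - 1}$ dual to a chosen basis $\omega_1,\dots,\omega_{g_2}$ of $H^0(C_2,\Omega^1_{C_2})$ (so $g_2$ is the genus of $C_2$), the map $P \mapsto [\,(f^{\ast}\omega_1)(P) : \cdots : (f^{\ast}\omega_{g_2})(P)\,]$, so $\kappa_{C_2} \circ f$ depends only on the linear map $f^{\ast}$ on global $1$-forms. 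Applying this to $f = \psi$ and to $f = \psi \circ \iota_1$ and using $(\psi \circ \iota_1)^{\ast} = \iota_1^{\ast} \circ \psi^{\ast} = -\,\psi^{\ast}$ on $H^0(C_2,\Omega^1_{C_2})$, the two composites differ only by an overall sign in every homogeneous coordinate, hence $\kappa_{C_2} \circ (\psi \circ \iota_1) = \kappa_{C_2} \circ \psi$; since $\kappa_{C_2}$ is $(C_2/\iota_2)$ followed by the embedding above, cancelling that embedding gives $(C_2/\iota_2) \circ \psi \circ \iota_1 = (C_2/\iota_2) \circ \psi$, which is exactly the required $\iota_1$-invariance. (As a byproduct one even recovers $\psi \circ \iota_1 = \iota_2 \circ \psi$, by checking that the only other pointwise possibility, $\psi \circ \iota_1 = \psi$, is excluded because it would make $\psi$ factor through $\mathbb{P}^1$ and force $C_2$ to be rational.)

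The main obstacle is bookkeeping rather than genuine difficulty. One has to be careful with the characteristic-sensitive standard inputs of the first paragraph and with the Frobenius reduction, and one must identify $\kappa_{C_2} \circ f$ correctly with the map given by the pulled-back differentials; it is precisely for this last point that separability of $\psi$ is arranged first, since otherwise $\psi^{\ast}$ kills all global $1$-forms and the description becomes vacuous. Once these routine points are in place, the sign computation $(\psi \circ \iota_1)^{\ast} = -\psi^{\ast}$ does all the real work.
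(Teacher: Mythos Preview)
Your argument is correct, but it takes a genuinely different route from the paper's.

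The paper argues via $\Sym^2$ and $\Pic^2$: for a hyperelliptic curve $C$ of genus at least $2$, the only positive-dimensional fibre of the Abel map $\Sym^2(C)\to\Pic^2(C)$ is the hyperelliptic $g^1_2$, i.e.\ the locus of pairs $P+\iota_C(P)$. Since the class of $P+\iota_1(P)$ is constant in $\Pic^2(C_1)$, the pushforward $\psi(P)+\psi(\iota_1(P))$ lies in a single fibre of $\Sym^2(C_2)\to\Pic^2(C_2)$; as $\psi$ is nonconstant this fibre is positive-dimensional, hence is the $g^1_2$ of $C_2$, so $\psi(\iota_1(P))=\iota_2(\psi(P))$ and the map descends.

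You instead use the canonical map: $\iota_C^*=-1$ on $H^0(C,\Omega^1)$, so $\kappa_C$ factors through $C/\iota_C$ followed by a rational normal curve embedding, and then the identity $(\psi\circ\iota_1)^*=-\psi^*$ on pulled-back differentials forces $\kappa_{C_2}\circ\psi\circ\iota_1=\kappa_{C_2}\circ\psi$, whence the descent. The two arguments are essentially dual (the $g^1_2$ versus the canonical system), but the paper's has the practical advantage that pushforward on divisors is insensitive to separability, so no Frobenius factorisation is needed; your approach pays for its Hodge-theoretic cleanliness with that extra reduction step in positive characteristic. On the other hand, your argument makes the stronger conclusion $\psi\circ\iota_1=\iota_2\circ\psi$ completely transparent and generalises immediately to any situation where one knows how the relevant involutions act on global $1$-forms.
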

	
	\begin{proof}
	Let $C$ be a hyperelliptic curve of genus greater than $1$ and let $W$ be a Weierstrass point of $C$.Then the fiber over $2W$ in $\Pic^2(C)$ of the natural map $\Sym^2(C) \rightarrow \Pic^2(C)$ is given by divisors of form $P+\overline{P}$ where $\overline{P}$ denotes the image of $P$ under the hyperelliptic involution.
	
	Note that for an elliptic curve, every point is Weierstrass and thus $2W$ doesn't specify a unique divisor class in $\Pic^2(C)$.
	
	Thus considering $\psi$ induces a map from $\Pic^2(C_2)$ to $\Pic^2(C_1)$, a pair of conjugate points on $C_1$ get mapped to a pair of conjugate points on $C_2$.
	\end{proof}
	
	\begin{Proposition}\label{Tofunctionfield}
		There exists a dominant map from a hyperelliptic curve $C: s^2 = D(t)$ where $D(t) \in \mathbb{F}_q[t]$ to a hyperelliptic curve $C_0: y^2 = f(x)$ where $f(x) \in \mathbb{F}_q[x]$ if and only if the quadratic twist $Dy^2 = f(x)$ has a nontrivial rational point $(x_0,y_0)$ over $\mathbb{F}_q(t)$.
	\end{Proposition}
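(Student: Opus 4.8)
The whole statement becomes transparent once everything is phrased in function fields. Write $\mathbb{F}_q(C)=\mathbb{F}_q(t)(s)$ with $s^2=D(t)$ and $\mathbb{F}_q(C_0)=\mathbb{F}_q(x)(y)$ with $y^2=f(x)$. A dominant $\mathbb{F}_q$-map $\phi:C\to C_0$ is the same thing as an embedding $\phi^\ast:\mathbb{F}_q(C_0)\hookrightarrow\mathbb{F}_q(C)$, i.e.\ a pair $X=\phi^\ast x$, $Y=\phi^\ast y$ in $\mathbb{F}_q(C)$ with $Y^2=f(X)$ and $X$ transcendental over $\mathbb{F}_q$ (curves being smooth, such a field embedding automatically comes from a morphism). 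On the other side, an $\mathbb{F}_q(t)$-point of the twist $Dy^2=f(x)$ is exactly a pair $(x_0,y_0)\in\mathbb{F}_q(t)^2$ with $D\,y_0^2=f(x_0)$. I read ``nontrivial'' as ``not a Weierstrass point of $C_0$ defined over $\mathbb{F}_q$'', which one checks is equivalent to $y_0\neq 0$, and also to $x_0\notin\mathbb{F}_q$: indeed if $x_0\in\mathbb{F}_q$ and $f(x_0)\neq 0$ then $D=f(x_0)y_0^{-2}$ is a constant times a square, so $s^2=D$ is a square in $\overline{\mathbb{F}_q}(t)$, contradicting that $C$ is a geometrically irreducible curve. (The points at infinity of the twist also contribute only Weierstrass points, since ``$x_0=\infty$'' again forces $D$ to be a constant times a square; so it costs nothing to work with the affine model $Dy^2=f(x)$.)

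\textbf{Easy direction ($\Leftarrow$).} Given a nontrivial $(x_0,y_0)\in\mathbb{F}_q(t)^2$ with $D\,y_0^2=f(x_0)$, substitute $s^2=D$ inside $\mathbb{F}_q(C)$: then $(s\,y_0)^2=D\,y_0^2=f(x_0)$. Since $x_0$ is transcendental over $\mathbb{F}_q$ and $y^2-f(x)$ is irreducible over $\mathbb{F}_q(x)$ (as $f$ has no repeated roots, $C_0$ being a curve), the assignment $x\mapsto x_0$, $y\mapsto s\,y_0$ defines an embedding $\mathbb{F}_q(C_0)\hookrightarrow\mathbb{F}_q(C)$, hence the desired dominant map $C\to C_0$ over $\mathbb{F}_q$.

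\textbf{Converse ($\Rightarrow$).} Given a dominant $\phi:C\to C_0$ over $\mathbb{F}_q$, first invoke Lemma~\ref{CtoE} (when $C_0$ is elliptic) or Lemma~\ref{C1toC2} (when $g(C_0)\ge 2$, in which case also $g(C)\ge g(C_0)\ge 2$ and the lemma applies) to replace $\phi$ by a dominant $\mathbb{F}_q$-map $\psi:C\to C_0$ sitting in a commuting square with the two hyperelliptic quotient maps $C\to\mathbb{P}^1$, $C_0\to\mathbb{P}^1$ and some $h:\mathbb{P}^1\to\mathbb{P}^1$. Translating that square into function fields, the quotient $C_0\to\mathbb{P}^1$ corresponds to $\mathbb{F}_q(x)\hookrightarrow\mathbb{F}_q(C_0)$ and $C\to\mathbb{P}^1$ to $\mathbb{F}_q(t)\hookrightarrow\mathbb{F}_q(C)$, so commutativity says precisely $x_0:=\psi^\ast x=h(t)\in\mathbb{F}_q(t)$. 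Then $Y:=\psi^\ast y$ satisfies $Y^2=f(x_0)\in\mathbb{F}_q(t)$; using $\mathbb{F}_q(C)=\mathbb{F}_q(t)\oplus\mathbb{F}_q(t)\,s$ with the hyperelliptic involution acting by $+1$ on the first summand and $-1$ on the second (here $q$ odd is used), $Y$ lies in one of these eigenspaces. It cannot lie in $\mathbb{F}_q(t)$, for then $\mathbb{F}_q(C_0)=\mathbb{F}_q(x_0,Y)\subseteq\mathbb{F}_q(t)$ and $\psi$ would factor through a nonconstant map $\mathbb{P}^1\to C_0$, impossible since $g(C_0)\ge 1$. Hence $Y=s\,y_0$ with $y_0\in\mathbb{F}_q(t)$, so $D\,y_0^2=s^2y_0^2=Y^2=f(x_0)$: the point $(x_0,y_0)$ lies on the twist, and it is nontrivial because $x_0=h(t)$ is non-constant (otherwise $\psi$ is constant).

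\textbf{Main obstacle.} The only real friction is in the converse, and it is exactly the content already extracted in Lemmas~\ref{CtoE} and \ref{C1toC2}: an arbitrary dominant map between hyperelliptic curves need not intertwine the hyperelliptic involutions, and arranging that it does (at the price of a translation-and-doubling when the target is elliptic) is what forces $\psi^\ast x$ to descend to $\mathbb{F}_q(t)$. Granting those lemmas, what remains is bookkeeping: the $\pm1$-eigenspace decomposition of $\mathbb{F}_q(C)$ over $\mathbb{F}_q(t)$, the exclusion of the degenerate factorization through $\mathbb{P}^1$, and matching ``nontrivial point'' with ``non-constant map''.
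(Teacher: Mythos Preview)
Your proof is correct and follows the same route as the paper's: both directions match, with the converse relying on Lemmas~\ref{CtoE} and \ref{C1toC2} to descend $\psi^\ast x$ to $\mathbb{F}_q(t)$, and the easy direction given by the explicit map $(t,s)\mapsto(x_0,s\,y_0)$. Your eigenspace argument for why $\psi^\ast y$ lands in $\mathbb{F}_q(t)\cdot s$ (rather than $\mathbb{F}_q(t)$) makes explicit a step the paper compresses into the single assertion that $C$ acquires a model $s^2=f(h(t))$, and your discussion of what ``nontrivial'' should mean is likewise more careful than the paper's treatment.
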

	
	\begin{proof}
	By the previous two lemmas \ref{CtoE} and \ref{C1toC2}, if a dominant map exists, $C$ has a defining equation of the form $ s^2 = f(h(t))$ for some rational function $h(t)$ in $\mathbb{F}_q(t)$. Thus, there exists $p(t) \in \mathbb{F}_q(t)$ such that $$D(p(t))^2 = f(h(t))$$ which is saying $$(x_0,y_0)=(h(t),p(t))$$ satisfies $$Dy_0^2=f(x_0).$$
	
	On the other hand, if there exists a point $(x_0,y_0)$ on the curve defined by $Dy^2 = f(x)$, then we have a dominant map $(t,s) \mapsto (x_0,y_0s)$ from $C$ to $C_0$.
	\end{proof}

	From Proposition \ref{Tofunctionfield}, our question about squarefree polynomials $D \in \mathbb{F}_q[t]$ with curve defined by $s^2=D(t)$ admitting a dominant map to a hyperelliptic curve of genus $g$ defined by $y^2 =f(x)$ is exactly the same as asking for nontrivial solutions of the equation $Dy^2=f(x)$ over the function field $\mathbb{F}_q(t)$. 
	
	Let $$h(t) = u(t) / v(t) \in \mathbb{F}_q(t),$$ where $u(t)$, $v(t) \in \mathbb{F}_q[t]$ and $ p(t) \in \mathbb{F}_q(t)$.
	
	If we have $Dp^2= f(h)$, then we get $$Dp^2v^{2g+2} = v^{2g+2}f(u/v) \in \mathbb{F}_q(t)[u,v]$$ which is a degree $2g+2$ homogeneous polynomial in $u,v$ and is denoted by $F(u,v)$ from now on.
	
	Thus $D$ is the squarefree part of $F(u,v)$ in $\mathbb{F}_q[t]^* / (\mathbb{F}_q[t]^*)^2$ for some $u,v \in \mathbb{F}_q[t]$ and we can give a bound on the number of $D$ by estimating the number of squarefree values taken by $F(u,v)$.
	
	The main tool in our lower bound is a theorem of Poonen showing that squarefree polynomials over a localization of a polynomial ring take many squarefree values.
	
	\begin{Proposition}\label{poonen}(Poonen~\cite{Poonen})\\
		Let $P$ be a finite set of primes in $\mathbb{F}_q[t]$, $A$ be the localization of $\mathbb{F}_q[t]$ by inverting the primes in $P$, $K = \mathbb{F}_q(t)$, $f \in A[x_1, \ldots , x_m]$ be a polynomial that is squarefree as an element of $K[x_1, \ldots , x_m]$ and for a choice of $x \in \mathbb{F}_q[t]^m$, we say that $f(x)$ is squarefree in $A$ if $(f(x))$ is a product of distinct primes in $A$. For $a \in A$, define $|a| = |A/a|$ and for $a \in A^n$, define $ |a| = \max{|a_i|}$. Let
		\[ S_f := \{ x \in \mathbb{F}_q[t]^m : f(x) \text{ is squarefree in } A \} \]
		and 
		\[ \mu_{S_f} := \lim\limits_{N \rightarrow \infty} \frac{|\{ a \in S_f : |a|<N \}|}{N^m} \]
		For each nonzero prime $ p$ of $A $, let $c_p$ be the number of $ x \in ( A / p^2 )^m $ that satisfy $f(x) = 0$ in $  A / p^2 $. 
		
		Then the limit  $\mu_{S_f}$ exists and is equal to $ \prod_{p}(1-c_p / |p|^{2m})$.
	\end{Proposition}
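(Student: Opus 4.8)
The statement is (the function field case of) Poonen's theorem, so the plan is to run his inclusion–exclusion sieve. Write $N=q^{n}$ and think of the box $\{x:|x|<N\}$ as tuples of polynomials of degree $<n$; the key elementary fact about $\mathbb{F}_q[t]$ is that for a fixed nonzero modulus $Q$ and $n\geq\deg Q$ every residue class modulo $Q$ contains the same number of polynomials of degree $<n$, so the box is \emph{exactly} equidistributed modulo any fixed modulus once $N$ is large. The condition ``$f(x)$ squarefree in $A$'' means ``$p^{2}\nmid f(x)$ for every prime $p$ of $\mathbb{F}_q[t]$ with $p\notin P$'' — one local condition per prime $p\notin P$, satisfied by a proportion $1-c_{p}/|p|^{2m}$ of residues modulo $p^{2}$ — and the Chinese Remainder Theorem makes distinct primes independent; inverting the finite set $P$ merely discards finitely many of these local conditions.

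First I would extract the main term from the small primes. Fixing $r$ and using equidistribution and CRT modulo $\prod_{\deg p\leq r,\,p\notin P}p^{2}$, for all large $N$
\[
\#\{x:|x|<N,\ p^{2}\nmid f(x)\ \text{for all}\ p\notin P\ \text{with}\ \deg p\leq r\}\;=\;N^{m}\prod_{\substack{\deg p\leq r\\ p\notin P}}\Bigl(1-\frac{c_{p}}{|p|^{2m}}\Bigr).
\]
To let $r\to\infty$ I need $\sum_{p}c_{p}/|p|^{2m}<\infty$, which follows once $c_{p}=O(|p|^{2m-2})$ for all but finitely many $p$ (there are at most $q^{d}$ primes of degree $d$ and $\sum_{d}q^{-d}<\infty$, and finitely many exceptional primes contribute only finitely many bounded factors). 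This bound is geometric: $f$ squarefree over $K=\mathbb{F}_q(t)$ makes the affine hypersurface $\{f=0\}$ generically reduced, so — after a linear change of the $x_i$ putting $f$ into a coordinate $x_m$ with top coefficient a unit and degree $\deg f$, and being careful about the usual pathologies of positive characteristic — the reduction of $\{f=0\}$ modulo a good prime $p$ has $O(|p|^{m-1})$ points over the residue field, at most $O(|p|^{m-2})$ of them singular; each smooth point lifts to $|p|^{m-1}$ solutions modulo $p^{2}$ and each singular point to at most $|p|^{m}$, whence $c_{p}=O(|p|^{2m-2})$. Consequently the infinite product converges and $\limsup_{N}\,\#\{x\in S_{f}:|x|<N\}/N^{m}\leq\prod_{p}(1-c_{p}/|p|^{2m})$.

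It remains to control the tail, i.e. to show
\[
\lim_{r\to\infty}\ \limsup_{N\to\infty}\ \frac{1}{N^{m}}\#\{x:|x|<N,\ p^{2}\mid f(x)\ \text{for some}\ p\notin P\ \text{with}\ \deg p>r\}=0,
\]
which, combined with the previous step, sandwiches the density and gives both existence of $\mu_{S_{f}}$ and the claimed value. For \emph{medium} primes ($\deg p>r$ but $|p|^{2}\leq N$) the locus $p^{2}\mid f(x)$ is a union of $c_{p}$ residue classes modulo $p^{2}$, each meeting the box in $\leq(2N/|p|^{2})^{m}$ tuples, so by the bound on $c_{p}$ its contribution is $O(N^{m}/|p|^{2})$ and the sum over $\deg p>r$ is $O(N^{m}\sum_{\deg p>r}|p|^{-2})$, which tends to $0$ with $r$, uniformly in $N$. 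The \emph{large} primes ($|p|^{2}>N$) are where I expect the real difficulty: here the box is too small to be equidistributed modulo $p^{2}$ and the counting above is vacuous. This is the technical core of \cite{Poonen}: one fibres $\mathbb{A}^{m}\to\mathbb{A}^{m-1}$ using the coordinate $x_m$ in which $f$ is monic of full degree, so that off the $x_m$-discriminant locus the value $f(x_{1},\dots,x_{m-1},x_m)$ has distinct roots modulo $p$ and hence only boundedly many zeros modulo $p^{2}$, weights this against the contribution of the discriminant locus, and crucially uses that a fixed nonzero value $f(x)\in\mathbb{F}_q[t]$ has degree $O(n)$ and is therefore divisible by the square of only a controlled collection of primes of degree $>r$, which is what prevents overcounting when one finally sums over $p$. (Over a function field one also has the option of deducing the tail bound from the Mason--Stothers $abc$ inequality, a theorem over $\mathbb{F}_q[t]$, along the lines of Granville's argument.) Assembling the main term with the medium- and large-prime tail estimates yields $\mu_{S_{f}}=\prod_{p}(1-c_{p}/|p|^{2m})$.
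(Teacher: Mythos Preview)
Your proposal is a correct outline of Poonen's inclusion--exclusion sieve and would indeed prove the statement; however, the paper itself does not reprove the result at all. Its entire proof is the one line ``This proposition follows Theorem~8.1 of \cite{Poonen} by setting the `box' to be $\{u,v\in\mathbb{F}_q[t]:|u|,|v|<N\}$.'' So the comparison is simply that the paper quotes Poonen's theorem as a black box, while you sketch its proof. What your route buys is self-containment and an explicit explanation of why the product converges (the bound $c_p=O(|p|^{2m-2})$) and of where the real work lies (the large-prime tail); what the paper's route buys is brevity, since the statement is literally the specialization of a published theorem. Two minor remarks on your sketch: in $\mathbb{F}_q[t]$ the equidistribution in a residue class modulo $Q$ is exact once $N\geq |Q|$, so the factor $2$ in $(2N/|p|^{2})^{m}$ is unnecessary; and your large-prime discussion remains a plan rather than a proof, so if you want a genuinely self-contained argument you would still need either to carry out Poonen's fibration or to implement the $abc$/Mason--Stothers alternative you mention.
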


	\begin{proof}
		This proposition follows Theorem 8.1 of \cite{Poonen} by setting the "box" to be $\{ u,v \in \mathbb{F}_q[t]: |u|,|v|<N  \}$.
	\end{proof}

	\begin{Remark}\label{defineA}
		 Proposition \ref{poonen} only helps us if $\mu_{S_f}>0$. In order to ensure this, it suffices to check that none of the factors $(1-c_p / |p|^{2m})$ is zero where we take $m=2$ for our case.
		
		If for some prime $\pi$ in $\mathbb{F}_q(t)$, $1-c_\pi / |\pi|^{4}=0$, then this means $F(u,v) \mod \pi^2$ vanishes for all $ (u,v) \in (\mathbb{F}_q[t])^2$. Thus $F(u,v) \mod \pi$ vanishes for all $ (u,v) \in (\mathbb{F}_q[t]/\pi)^2$. Since the coefficients of $F$ are units in $\mathbb{F}_q[t]$, $F \mod \pi$ is not the zero polynomial. This implies it can at most have $\deg F |\mathbb{F}_q[t]/\pi|$ solutions over $\mathbb{F}_q[t]/\pi$. So $$\deg F |\mathbb{F}_q[t]/\pi| \ge |\mathbb{F}_q[t]/\pi|^2$$ which is equivalent to $\deg F \ge |\pi|$.
		
		Thus, we choose $P_f$ be the set of primes $P$ of $\mathbb{F}_q(t)$ such that $|P| < n $. Let $A$ be the localization of $\mathbb{F}_q[t]$ by inverting primes in $P_f$. This implies $1-c_p / |p|^{4} \ne 0$ for any prime $p$ in $A$. So is their product.
	\end{Remark}

	We now have all the tools we need for the proof of Proposition \ref{MainProp}.

	\begin{proof}[Proof of Proposition \ref{MainProp}]
	Let $y^2=f(x)$ be the defining equation for $C_0$ stated in the Proposition. Fix a factorization of $f(x)$ as follows:
	
	If $\deg f = 2g+2$, then by the condition of the proposition, $f$ is squarefree and reducible. Fix a nontrivial factorization $f=f_1f_2$ where $\gcd(f_1,f_2)=1$.
	
	If $\deg f = 2g+1$, let the the factorization $f=f_1f_2$ be the trivial one where $f_1=f$ and $f_2=1$.
		
	Let $n=2g+2$. Given $u,v \in \mathbb{F}_q[t]$, define $F(u,v) = v^{n}f(u/v)$ which is a squarefree homogeneous polynomial in $\mathbb{F}_q[t][u,v]$. Then the factorization of $f$ induces a natural factorization $F=F_1F_2$ where
	 $$F_1(u,v)=v^{\deg f_1}f_1(u/v)$$ $$F_2(u,v)=v^{n - \deg f_1}f_2(u/v).$$
	
	Recall Definition \ref{DefinePN}
	\[P(N) = \{ D \in \mathbb{F}_q[t] : D \text{ monic, squarefree, } |D|<N \}\]
	
	Define set
	$$G(N) = \{ D\in P(N) : \text{curve } y^2 = D \text{ admits a dominant map to } C_0  \}$$
	
	Let the set $A$ be defined as in Remark \ref{defineA}. Suppose $u$ and $v$ are elements of $ \mathbb{F}_q[t]$ such that $F(u,v)$ is squarefree in $A$, take $D \in \mathbb{F}_q[t]$ to be the squarefree part of $F(u,v)$ in $\mathbb{F}_q[t]$; then the curve $Dy^2=f(x)$ has a point $(u/v,a/v^{n/2})$ over $K=\mathbb{F}_q(t)$ where $a$ is a unit in $A$. By Proposition \ref{Tofunctionfield}, this implies curve $y^2 = D$ admits a dominant map to $C_0$.
	
	For pairs $(u,v)$ with $ |u|, |v| < N^{1/n}$, we get $|F(u,v)| < N$.
	
	Thus, we can define a subset of $G(N)$ as:
	 $$G'(N) = \{D \in P(N): \exists u,v \in \mathbb{F}_q[t], |u|,|v|<N^{1/n} \text{ and } F(u,v)=a^2D \}$$
	where $a$ is a unit in $A$.
	
	Define set $W(N)$ as follows: $$W(N) = \{ (u,v) \in \mathbb{F}_q[t]:F(u,v) \text{ squarefree in } A, |u|,|v|<N^{1/n} \}.$$
	
	We have an explicit surjective map from $W(N)$ to $G'(N)$.
	$$ \phi: (u,v) \mapsto D$$
	where $D$ is the squarefree part of $F(u,v)$ in $\mathbb{F}_q[t]$.	
	
	By Proposition \ref{poonen}, 
	\[ \lim\limits_{N \rightarrow \infty}\frac{|W(N)|}{N^{2/ n}} \gg \mu_{S_f} >0 \]
	
	Now to give a lower bound on the size of $G'(N)$, we need to give an upper bound on the size of each fiber of $\phi$. 
	
	For each fixed $D \in G'(N)$, want to count pairs $(u,v)$ with $F(u,v) = a^2 D$ for some unit $a$. Since $F=F_1 F_2$, for each $a$, there exist decompositions $D_1 D_2 = a^2D$ such that 
	\[ F_1(u,v)=D_1\]
	\[ F_2(u,v)=D_2\]
	By construction, we have that $F_1$ and $F_2$ are coprime. So there are fewer than $n^2$ solutions for each pair of equations by Bezout's theorem and there are at most $d(a^2D)$ such decompositions for each $a$ where $d(a^2D)$ denotes the number of factors of $a^2D$ in $\mathbb{F}_q[t]$.
	
	For each $(u,v) \in W(N)$, $|F(u,v)| \le N$. Thus, we can give an upper bound for $d(a^2D)$ by letting $c(N) = \max \{ d(X) : X \in \mathbb{F}_q[t], |X|<N \}$.
	
	For each fixed $D \in G'(N)$, the size of $\phi^{-1}(D)$ is bounded above by $n^2 c(N)$.
	
	Then for any $N$, $$|G'(N)| \geq \frac{|W(N)|}{n^2c(N)}$$
	
	Since $d(X) < |X|^\epsilon$ for any $\epsilon>0$ and $X \in \mathbb{F}_q[t]$ when $|X|$ is sufficiently large, we get 
	$$ |G(N)| \geq |G'(N)| \geq \frac{|W(N)|}{n^2c(N)} \gg \frac{\mu_{S_f}}{c_\epsilon} N^{2/ n - \epsilon} $$
	where $c_\epsilon$ is a constant depending on $\epsilon$.

	\end{proof}
		
	\section{A View Toward Ranks of Elliptic Curves}

	We start by recalling some standard definitions. 
	
	\begin{Definition}
		An elliptic curve $E$ defined over $\mathbb{F}_q(t)$ is constant if it can be defined by a Weierstrass form with coefficients in $\mathbb{F}_q$.
		
		An elliptic curve $E$ defined over $\mathbb{F}_q(t)$ is isotrivial if there is a finite extension $L$ of $\mathbb{F}_q(t)$ such that $E$ becomes constant over $L$. Equivalently, $E$ is isotrivial if and only if $j(E) \in \mathbb{F}_q $.
	\end{Definition}
	
	\begin{Proposition}[Proposition 6.1 of \cite{Ulmer}]
		
		Let $E_0$ be an elliptic curve over $k =\mathbb{F}_q$. Let $K$ be the function field $k(C)$ of a curve $C$ over $k$. Let $E_K = E_0 \times_k K$.
		
		There is a canonical isomorphism $$E_K(K) \cong \Mor_k(C,E_0)$$ where $\Mor_k$ denotes morphisms of $k$-schemes. Under this isomorphism, $E_K(K)_{tor}$ corresponds to the subgroup of constant morphisms.
	\end{Proposition}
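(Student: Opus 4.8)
The plan is to unwind the base change $E_K = E_0 \times_k K$, translate $K$-points into $k$-morphisms out of the generic point of $C$, and then extend these to morphisms on all of $C$ using properness of $E_0$ and regularity of $C$. First I would apply the universal property of the fibre product: a $K$-point of $E_K$ is a section $\operatorname{Spec} K \to E_0 \times_k \operatorname{Spec} K$ over $\operatorname{Spec} K$, which is the same datum as a $k$-morphism $\operatorname{Spec} K \to E_0$ (the component landing in $\operatorname{Spec} K$ being forced to be the identity), so $E_K(K) = \Hom_k(\operatorname{Spec} K, E_0)$ naturally. Taking $C$ to be the smooth, projective, geometrically integral model of $K$ (the setting of \cite{Ulmer}), I would then identify $\Hom_k(\operatorname{Spec} K, E_0)$ with $\Mor_k(C, E_0)$ by restriction to the generic point: this map is injective because $C$ is reduced and $E_0$ separated, and surjective because a rational map from a regular curve to a proper $k$-scheme extends uniquely to a morphism (valuative criterion of properness applied at each closed point of $C$, whose local ring is a discrete valuation ring). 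The resulting bijection $E_K(K) \cong \Mor_k(C, E_0)$ respects the group structures, since on both sides addition is post-composition with the group law $m \colon E_0 \times_k E_0 \to E_0$ together with inversion on $E_0$; this compatibility is formal but I would spell it out.

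For the torsion statement, one inclusion is easy: a constant morphism factors through a $k$-point $\operatorname{Spec} k \to E_0$, so the constant morphisms form a subgroup isomorphic to $E_0(k)$, which is finite because $k = \mathbb{F}_q$; hence every constant morphism is torsion. For the reverse inclusion, suppose $f \in \Mor_k(C, E_0)$ satisfies $[n] \circ f = O$ for some $n \ge 1$. I would base-change to $\overline{k}$: then every geometric point of the image of $f_{\overline{k}} \colon C_{\overline{k}} \to E_{0,\overline{k}}$ lies in the finite set $E_0[n](\overline{k})$. If $f_{\overline{k}}$ were nonconstant it would be a nonconstant morphism from a complete irreducible curve, hence its image would be all of $E_{0,\overline{k}}$, contradicting finiteness of that image. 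So $f_{\overline{k}}$ is constant; since $C$ is geometrically integral and proper, $H^0(C, \mathcal{O}_C) = k$, which forces the scheme-theoretic image of $f$ to be $\operatorname{Spec} k$, i.e. $f$ is constant.

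The main obstacle I anticipate is exactly the reverse torsion inclusion when $p \mid n$, where the subscheme $E_0[n] \subseteq E_0$ is non-reduced: rather than working with $E_0[n]$ scheme-theoretically, the argument above passes to $\overline{k}$-points, using crucially that $C$ is reduced over the perfect field $k$ so that a morphism to a curve with finite geometric image must be constant. Everything else — the universal property step, the extension of rational maps, and the identification of group laws — is standard. Since the statement is Proposition 6.1 of \cite{Ulmer}, it would also be legitimate simply to cite it, and I would do so in the final write-up, including the sketch above only for the reader's convenience.
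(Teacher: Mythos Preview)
Your argument is correct and standard: the universal property of the fibre product identifies $E_K(K)$ with $k$-morphisms from the generic point, the valuative criterion extends these over the smooth projective curve $C$, and the torsion-versus-constant equivalence follows from finiteness of $E_0(k)$ in one direction and irreducibility of $C_{\overline{k}}$ together with finiteness of $E_0[n](\overline{k})$ in the other. There is nothing to compare against: the paper does not supply its own proof of this proposition but simply quotes it as Proposition~6.1 of \cite{Ulmer}, so your sketch in fact goes beyond what the paper does, and your suggestion to cite \cite{Ulmer} in the final write-up matches the paper's approach exactly.
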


	\begin{Corollary}[Corollary 6.2 of \cite{Ulmer}]
		
		Let $J(C)$ be the Jacobian of $C$. Then we have canonical isomorphisms of abelian groups
		$$ E_K(K)/(E_K(K))_{tor} \cong \Hom_{k-av}(J(C),E_0) \cong \Hom_{k-av}(E_0,J(C)) $$
		where $\Hom_{k-av}$ denotes morphisms of abelian varieties over $k$.
	\end{Corollary}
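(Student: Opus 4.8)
The plan is to bootstrap directly from the Proposition just stated (Proposition 6.1 of \cite{Ulmer}): it identifies $E_K(K)$ with $\Mor_k(C,E_0)$ and identifies $E_K(K)_{tor}$ with the subgroup of constant morphisms. Because $E_0$ is defined over the finite field $k=\mathbb{F}_q$, that subgroup of constant morphisms is canonically the \emph{finite} group $E_0(k)$, so $E_K(K)/E_K(K)_{tor}\cong\Mor_k(C,E_0)/E_0(k)$. Thus everything reduces to constructing two isomorphisms of abelian groups, $\Mor_k(C,E_0)/E_0(k)\cong\Hom_{k-av}(J(C),E_0)$ and $\Hom_{k-av}(J(C),E_0)\cong\Hom_{k-av}(E_0,J(C))$.

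For the first isomorphism I would exploit the universal mapping property of $J(C)$ as the Albanese (equivalently Picard) variety of $C$. Fix a $k$-rational divisor class $\delta$ of degree $1$ on $C$ — one exists over a finite field — and let $\iota\colon C\to J(C)=\Pic^0(C)$, $P\mapsto[P]-\delta$, be the associated map. A morphism $f\colon C\to E_0$ of $k$-schemes then factors uniquely as $f=\psi_f\circ\iota+e_f$ with $\psi_f\in\Hom_{k-av}(J(C),E_0)$ and $e_f\in E_0(k)$; equivalently, $\psi_f$ can be produced functorially from the pullback $f^{*}\colon E_0\cong\Pic^0(E_0)\to\Pic^0(C)=J(C)$ on degree-zero line bundles (which sees $f$ only modulo constants) by dualizing through the canonical principal polarizations of $E_0$ and $J(C)$. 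I would then check that $f\mapsto\psi_f$ is additive for the pointwise group law on $\Mor_k(C,E_0)$, that its kernel is exactly the constant morphisms — using that $\iota(C)$ generates $J(C)$ as a group — and that it is surjective, since for $\psi\in\Hom_{k-av}(J(C),E_0)$ the composite $\psi\circ\iota$ is a morphism $g$ with $\psi_g=\psi$. This gives $\Mor_k(C,E_0)/E_0(k)\cong\Hom_{k-av}(J(C),E_0)$.

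The second isomorphism is pure duality of abelian varieties: $\phi\mapsto\phi^{\vee}$ is an additive bijection $\Hom_{k-av}(J(C),E_0)\xrightarrow{\ \sim\ }\Hom_{k-av}(E_0^{\vee},J(C)^{\vee})$ (bijective since $(\,\cdot\,)^{\vee\vee}=\mathrm{id}$), and composing on both sides with the canonical principal polarizations $E_0\xrightarrow{\sim}E_0^{\vee}$ and $J(C)\xrightarrow{\sim}J(C)^{\vee}$ (the theta polarization expressing the autoduality of the Jacobian) identifies the right-hand side with $\Hom_{k-av}(E_0,J(C))$; each step is an isomorphism of abelian groups, so the composite is, and all the identifications are canonical. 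The step I expect to require the most care is the surjectivity in the previous paragraph — that is, checking that $J(C)$ genuinely plays the role of the Albanese variety of $C$ and that every abelian-variety homomorphism $J(C)\to E_0$ is realized by a morphism $C\to E_0$, while handling cleanly the fact that $C$ need not carry a $k$-rational point (one works instead with a $k$-rational divisor class of degree one). Everything else — additivity, compatibility of all the maps with the respective group laws, and the identification of torsion with constant morphisms — is formal once Proposition 6.1 and the finiteness of $E_0(k)$ are in hand.
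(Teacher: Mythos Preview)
The paper does not supply its own proof of this statement: it is quoted verbatim as Corollary~6.2 of \cite{Ulmer} and left unproved. Your argument is correct and is essentially the standard one (and the one Ulmer gives): deduce $E_K(K)/E_K(K)_{tor}\cong\Mor_k(C,E_0)/E_0(k)$ from the preceding Proposition, invoke the Albanese property of $J(C)$ to identify that quotient with $\Hom_{k-av}(J(C),E_0)$, and then pass to $\Hom_{k-av}(E_0,J(C))$ via the principal polarizations of $E_0$ and $J(C)$. Your care about the possible absence of a $k$-rational point on $C$, and the use instead of a $k$-rational degree-one divisor class (which exists over a finite field), is exactly the right fix; nothing further is needed.
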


	\begin{Proposition}
	Let $E = E_0 \times \mathbb{F}_q(t)$ be a constant elliptic curve over $\mathbb{F}_q(t)$. For any $D\in \mathbb{F}_q[t]$, let $E_D$ denote the quadratic twist of $E$ by $D$.
	Let $P(N)$ be the set $\{ D\in \mathbb{F}_q[t]: \text{ monic,} \text{ squarefree, } |D|<N \}$ as in Definition \ref{DefinePN}.
	Let $R_m(N)$ be the set $\{ D\in P(N): \rank E_D \ge m \}$.
	
	Then for any $\epsilon >0$, there exist nonzero constants $B_\epsilon$ and $N_\epsilon$ such that $$ |R_2(N)| \ge B_\epsilon N^{1/2 - \epsilon} $$ for any $N>N_\epsilon$.
	
	Moreover, if the rank of $ \End_{\mathbb{F}_q}(E_0)$ is $4$, then we can replace $R_2(N)$ with $R_4(N)$ and the conclusion still holds.
\end{Proposition}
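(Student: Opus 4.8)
The plan is to reduce the statement to Proposition \ref{MainProp} applied with the fixed target curve $C_0 = E_0$, and then to convert its geometric output --- a dominant map $C_D \to E_0$ --- into the desired rank bound using the Corollary of Ulmer recalled above.

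\textbf{Step 1: the arithmetic dictionary.} For a monic squarefree $D$ write $K_D = \mathbb{F}_q(t)(\sqrt{D})$ for the function field of the hyperelliptic curve $C_D : s^2 = D(t)$, so that $\chi_D$ is the quadratic character of $K_D/\mathbb{F}_q(t)$ and $E_D$ is the quadratic twist of $E = E_0 \times_{\mathbb{F}_q} \mathbb{F}_q(t)$ by it. Since $q$ is odd, the standard decomposition of Mordell--Weil groups in a quadratic extension gives
\[ E_0(K_D) \otimes \mathbb{Q} \;\cong\; \bigl(E_0(\mathbb{F}_q(t)) \otimes \mathbb{Q}\bigr) \oplus \bigl(E_D(\mathbb{F}_q(t)) \otimes \mathbb{Q}\bigr). \]
Applying the Corollary of Ulmer with $C = \mathbb{P}^1$ (whose Jacobian vanishes) shows $E_0(\mathbb{F}_q(t))$ is finite, so $\rank E_D(\mathbb{F}_q(t)) = \rank E_0(K_D)$; applying it with $C = C_D$ gives $\rank E_0(K_D) = \rank_{\mathbb{Z}} \Hom_{\mathbb{F}_q}(E_0, J(C_D))$. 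Thus it suffices to exhibit $\gg N^{1/2 - \epsilon}$ monic squarefree $D$ with $|D| < N$ for which $\rank_{\mathbb{Z}} \Hom_{\mathbb{F}_q}(E_0, J(C_D)) \ge 2$ (resp.\ $\ge 4$).

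\textbf{Step 2: applying Proposition \ref{MainProp} and bounding the Hom-rank.} An elliptic curve $E_0$ over the odd field $\mathbb{F}_q$ has a Weierstrass model $y^2 = f(x)$ with $\deg f = 3 = 2g+1$ for $g = 1$, so it satisfies the hypothesis of Proposition \ref{MainProp} (the case $\deg f = 2g+1$, no reducibility required). That proposition produces at least $B_\epsilon N^{1/(g+1) - \epsilon} = B_\epsilon N^{1/2 - \epsilon}$ monic squarefree $D$ with $|D| < N$ such that $C_D$ admits a dominant map $\phi \colon C_D \to E_0$. For each such $D$, pullback of divisor classes gives $\phi^* \colon E_0 = J(E_0) \to J(C_D)$ with $\phi_* \circ \phi^* = [\deg \phi]$, hence $\phi^*$ has finite kernel, and $\beta \mapsto \phi^* \circ \beta$ embeds $\End_{\mathbb{F}_q}(E_0)$ into $\Hom_{\mathbb{F}_q}(E_0, J(C_D))$; therefore $\rank_{\mathbb{Z}} \Hom_{\mathbb{F}_q}(E_0, J(C_D)) \ge \rank_{\mathbb{Z}} \End_{\mathbb{F}_q}(E_0)$. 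For an elliptic curve over $\mathbb{F}_q$ the Frobenius $\pi$ satisfies $\pi^2 - a\pi + q = 0$: if $a^2 \ne 4q$ then $\mathbb{Z}[\pi] \subseteq \End_{\mathbb{F}_q}(E_0)$ already has rank $2$; if $a^2 = 4q$ then $\pi \in \mathbb{Z}$, $E_0$ is supersingular, every geometric endomorphism of $E_0$ is defined over $\mathbb{F}_q$, and $\rank_{\mathbb{Z}} \End_{\mathbb{F}_q}(E_0) = 4$. Combining with Step 1, $\rank E_D \ge 2$ for every such $D$, and $\rank E_D \ge 4$ when $\rank_{\mathbb{Z}} \End_{\mathbb{F}_q}(E_0) = 4$; hence such $D$ lie in $R_2(N)$ (resp.\ $R_4(N)$), proving $|R_2(N)| \ge B_\epsilon N^{1/2 - \epsilon}$ (resp.\ the same bound for $R_4(N)$).

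\textbf{On the difficulty.} None of these ingredients is deep: the quadratic-twist decomposition of Mordell--Weil groups, the identity $\phi_* \circ \phi^* = [\deg \phi]$, and the embedding $\End_{\mathbb{F}_q}(E_0) \hookrightarrow \Hom_{\mathbb{F}_q}(E_0, J(C_D))$ via $\phi^*$ are all standard, and the counting is supplied verbatim by Proposition \ref{MainProp}. The one point that needs a moment's care is the bound $\rank_{\mathbb{Z}} \End_{\mathbb{F}_q}(E_0) \ge 2$ in the supersingular case with Frobenius trace $\pm 2\sqrt{q}$ (which forces $q$ to be a perfect square); this is precisely the case where the endomorphism ring has rank $4$, accounting for the strengthened conclusion about $R_4(N)$.
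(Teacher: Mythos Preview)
Your proof is correct and follows essentially the same route as the paper: apply Proposition~\ref{MainProp} to $E_0$ (genus $1$, odd Weierstrass model) to produce $\gg N^{1/2-\epsilon}$ many $D$ with a dominant map $C_D \to E_0$, and then convert this into $\rank E_D \ge \rank \End_{\mathbb{F}_q}(E_0)$ via Ulmer's identification of $E_0(K_D)/\mathrm{tors}$ with $\Hom_{\mathbb{F}_q}(E_0,J(C_D))$. The paper's own proof is terser---it simply cites Proposition~4.2, Corollary~4.3 and Poincar\'e complete reducibility for the inequality $\rank E_D \ge \rank \End(E_0)$, and asserts without further comment that the endomorphism rank is $2$ or $4$---whereas you make explicit the quadratic-twist decomposition of $E_0(K_D)\otimes\mathbb{Q}$, the embedding $\End_{\mathbb{F}_q}(E_0)\hookrightarrow \Hom_{\mathbb{F}_q}(E_0,J(C_D))$ via $\beta\mapsto\phi^*\circ\beta$, and the dichotomy $a^2\ne 4q$ versus $a^2=4q$ for the endomorphism rank; these details are implicit in the paper (some appear later in the proof of Proposition~4.6), so your argument is a more self-contained version of the same proof rather than a different one.
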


\begin{proof}
	By Prop 3.1, for any $\epsilon > 0$, there exists a nonzero constant $B_\epsilon$ such that at least $B_\epsilon N^{1/2 - \epsilon}$ hyperelliptic curves $y^2 = D$ with $|D|<N$ admit a dominant map to $E_0$ when $N$ is large. 
	
	By Proposition 4.2, Corollary 4.3 and  Poincar\'e complete reducibility \cite{Ulmer}, for such $D$, rank $E_D \geq$ rank $\End(E_0)$.
	
	Since our ground field is of positive characteristic, the endomorphism ring of $E_0$ has rank $2$ or $4$.
\end{proof}

\begin{Proposition}
	Let $E = E_0 \times \mathbb{F}_q(t)$ be a constant elliptic curve over $\mathbb{F}_q(t)$ where $E_0[2](\mathbb{F}_q) \ne O$. When $p \neq 2$, $q \neq 3,9$ and $a^2 - 4q \notin \{ -3,-4,-7 \}$ where $a$ is the trace of geometric Frobenius acting on the Tate module, we have the following.
	
	Let $P(N) = \{ D\in \mathbb{F}_q[t]: \text{ monic,} \text{ squarefree, } |D|<N \}$.
	
	Let $R_m(N)$ be the set $\{ D\in P(N): \text{rank } E_D \ge m \}$.

	Then for any $\epsilon >0$, there exist nonzero constants $B_\epsilon$ and $N_\epsilon$ such that $$|R_4(N)| \geq B_\epsilon N^{1/3 - \epsilon} $$ for any $N>N_\epsilon$.
	
	Moreover, if the rank of $ \End_{\mathbb{F}_q}(E_0)$ is $4$, then we can replace $R_4$ with $R_8$ and the conclusion holds.
\end{Proposition}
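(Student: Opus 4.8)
The plan is to reduce, exactly as in the proof of the previous proposition, to an instance of Proposition \ref{MainProp} — this time applied to a genus-$2$ curve — the only new input being the construction of that curve from $E_0$ using its rational $2$-torsion point.

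Recall from Corollary 4.3 that for the hyperelliptic curve $C_D:s^2=D(t)$ there is a canonical isomorphism of $\mathbb Z$-modules
\[ E_D\big(\mathbb F_q(t)\big)\big/\big(E_D(\mathbb F_q(t))\big)_{\mathrm{tor}}\;\cong\;\Hom_{\mathbb F_q\text{-}\mathrm{av}}\big(J(C_D),\,E_0\big), \]
so $\rank E_D(\mathbb F_q(t))=\rank_{\mathbb Z}\Hom_{\mathbb F_q\text{-}\mathrm{av}}(J(C_D),E_0)$. Hence if $J(C_D)$ has an $\mathbb F_q$-isogeny factor $\mathbb F_q$-isogenous to $E_0\times E_1$ with $E_1$ lying in the $\mathbb F_q$-isogeny class of $E_0$, then by Poincar\'e reducibility
\[ \rank E_D\;\geq\;\rank_{\mathbb Z}\Hom(E_0,E_0)+\rank_{\mathbb Z}\Hom(E_1,E_0)\;=\;2\,\rank_{\mathbb Z}\End_{\mathbb F_q}(E_0), \]
since $\Hom(E_1,E_0)$ and $\End(E_0)$ have the same $\mathbb Z$-rank once $E_1\sim_{\mathbb F_q}E_0$; this is $\geq 4$, and is $\geq 8$ precisely when $\rank_{\mathbb Z}\End_{\mathbb F_q}(E_0)=4$. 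So it suffices to produce a single genus-$2$ curve $C_0$ over $\mathbb F_q$ such that (a) $J(C_0)$ is $\mathbb F_q$-isogenous to $E_0\times E_1$ with $E_1$ in the $\mathbb F_q$-isogeny class of $E_0$, and (b) $C_0$ admits a defining equation $y^2=f(x)$ with $\deg f=6$ and $f$ reducible, or $\deg f=5$. Granting $C_0$, Proposition \ref{MainProp} with $g=2$ yields, for every $\epsilon>0$, constants $B_\epsilon,N_\epsilon>0$ and at least $B_\epsilon N^{1/3-\epsilon}$ monic squarefree $D$ with $|D|<N$ admitting a dominant map $C_D\to C_0$; each such map induces a surjection $J(C_D)\twoheadrightarrow J(C_0)$ up to isogeny, so (a) puts $D$ in $R_4(N)$ (and in $R_8(N)$ when $\rank\End_{\mathbb F_q}(E_0)=4$), which gives the claimed bounds.

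For the construction I would look among the \emph{bielliptic} genus-$2$ curves $y^2=g(x^2)$ with $g\in\mathbb F_q[x]$ of degree $3$ and $g(0)\neq0$: such a curve is smooth of genus $2$, and because $g(x^2)=\prod_i(x^2-r_i)$ factors into three quadratics, requirement (b) holds automatically. Its two elliptic quotients, by $x\mapsto-x$ and by $(x,y)\mapsto(-x,-y)$, are $E^{(1)}:y^2=g(z)$ and $E^{(2)}:y^2=z\,g(z)$, and $J(C_0)$ is $\mathbb F_q$-isogenous to $E^{(1)}\times E^{(2)}$. One wants an affine cubic model $g$ of $E_0$ with $E^{(1)}\cong E_0$ and $E^{(2)}$ in the $\mathbb F_q$-isogeny class of $E_0$. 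The hypothesis $E_0[2](\mathbb F_q)\neq O$ is what makes room: taking the rational $2$-torsion point to be one of the three affine branch points and using the remaining freedom $z\mapsto z+\beta$ ($\beta\in\mathbb F_q$), which replaces $g$ by $g(z+\beta)$ and moves the branch locus of $E^{(2)}$, one has a one-parameter family in which $j(E^{(2)})$ varies, and one picks $\beta$ so that $E^{(2)}$ falls into the isogeny class of $E_0$ (for instance so that $E^{(2)}$ is the curve $2$-isogenous to $E_0$ via the rational $2$-torsion point, which is automatically $\sim_{\mathbb F_q}E_0$). The exceptional values are exactly the obstructions: if $a^2-4q\in\{-3,-7\}$ then $\#E_0(\mathbb F_q)$ is odd and the hypothesis $E_0[2](\mathbb F_q)\neq O$ is vacuous; if $a^2-4q=-4$ then $\End_{\mathbb F_q}(E_0)=\mathbb Z[i]$ and the degree-$2$ endomorphism $1+i$ has kernel the unique rational order-$2$ subgroup, so the natural candidate $E^{(2)}$ becomes $E_0$ itself and $E^{(1)}\times E^{(2)}$ degenerates to a polarized product rather than a genus-$2$ Jacobian; and $q=3,9$ are too small to run the one-parameter argument.

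The main obstacle is precisely this last step — exhibiting $C_0$ over $\mathbb F_q$ with $J(C_0)$ $\mathbb F_q$-isogenous to $E_0\times E_1$ for $E_1$ in the isogeny class of $E_0$, i.e. hitting that isogeny class inside the family of second quotients while keeping $C_0$ smooth of genus $2$, and checking that this is always possible outside the listed $(q,a)$. The remaining ingredients (the reduction via Corollary 4.3, the reducibility of the sextic $g(x^2)$, and the appeal to Proposition \ref{MainProp}) are routine.
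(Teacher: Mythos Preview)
Your reduction is exactly the paper's: once one has a genus-$2$ curve $C_0$ over $\mathbb F_q$ with $J(C_0)$ isogenous to $E_0\times E_0$ (or $E_0\times E_1$ with $E_1\sim_{\mathbb F_q}E_0$) and with a reducible sextic model, Proposition~\ref{MainProp} and Corollary~4.3 finish the job. The divergence is in how $C_0$ is produced.

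The paper does not construct $C_0$; it invokes the Howe--Nart--Ritzenthaler classification \cite{Jacobian} as a black box. That theorem says precisely which isogeny classes of abelian surfaces over $\mathbb F_q$ contain a Jacobian, and the hypotheses $p\neq 2$, $q\neq 3,9$, $a^2-4q\notin\{-3,-4,-7\}$ are exactly their conditions for the class of $E_0\times E_0$. The hypothesis $E_0[2](\mathbb F_q)\neq O$, equivalently $a$ even, is used \emph{separately}: the paper computes that Frobenius then acts on $J(C_0)[2]$ with characteristic polynomial $x^4+1$, whereas a transitive action on the six Weierstrass points would force $x^4+x^2+1$; hence the sextic is reducible and Proposition~\ref{MainProp} applies.

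Your bielliptic ansatz $y^2=g(x^2)$ is attractive because reducibility of the sextic is automatic, but the existence step is a genuine gap. The sentence ``one picks $\beta$ so that $E^{(2)}$ falls into the isogeny class of $E_0$'' is not a proof: you have a one-parameter family $E^{(2)}_\beta$ over $\mathbb F_q$ and need it to meet a fixed isogeny class, which is a nontrivial arithmetic statement. The specific suggestion that $E^{(2)}$ can be taken to be the $2$-isogenous curve $E_0/\langle P\rangle$ is not verified (write out $zg(z+\beta)$ and you will see the branch loci do not match up in any obvious way). Your reading of the exceptional conditions is also heuristic rather than a proof that the construction fails there and succeeds elsewhere. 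In effect you are proposing to re-derive a piece of \cite{Jacobian}; that paper uses several constructions beyond the bielliptic one (gluings along $2$-torsion with nontrivial polarizations), so the bielliptic family alone may not cover every $(q,a)$ allowed by the hypotheses. The clean fix is to cite \cite{Jacobian} for the existence of $C_0$, and then either keep your observation that a bielliptic model (when available) makes reducibility trivial, or use the paper's Frobenius-on-$2$-torsion argument, which works for whatever $C_0$ HNR hands you.
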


\begin{proof}
	By \cite{Jacobian}, we know when the conditions on $p$, $q$, $a^2-4q$ in the statement of the proposition are satisfied, there exists a Jacobian variety isogenous to $E_0 \times E_0$ over $\mathbb{F}_q$. This Jacobian variety corresponds to a genus $2$ curve $C$. We now show that $C$ has a defining equation that satisfies the condition for Proposition 3.1.
	
	Let $y^2 = f(x)$ be a defining equation for $C$ and assume $\deg f = 6$.
	
	Denote the roots of $f$ by $x_1, \ldots, x_6$. Then the $2$-torsion group of $J(C)$ is generated by divisors $(x_1,0)-(x_i,0)$ where $i=2,3,4,5$. Thus, using this basis, from the action on $x_1, \ldots, x_6$, we get the matrix representation of the Frobenius action on $J(C)[2] \simeq (\mathbb{F}_2)^4$. If Frobenius acts on the roots transitively, then the characteristic polynomial of the action on the $\mathbb{F}_2$ vector space is $x^4+x^2+1$.
	
	Since we know the characteristic polynomial of Frobenius acting on the Tate module of $J(C)$ is $(1-ax+qx^2)^2$, we see that the action of Frobenius on $J(C_0)[2]$ has characteristic polynomial $x^4+1$ when $a$ is even. And $a$ is even if and only if condition $E_0[2](\mathbb{F}_q) \ne O$ holds. Thus Frobenius doesn't act transitively on the Weierstrass points of curve $C$.
	
	Since Frobenius doesn't act transitively on the Weierstrass points of $C$, it has a defining equation of the form $y^2=f(x)$ where $f$ is not irreducible. 
	
	By Prop 3.1, for any $\epsilon>0$, at least $B_\epsilon N^{1/3-\epsilon}$ hyperelliptic curves $y^2 = D$ with $|D|<N$ admit a dominant map to this fixed genus $2$ curve when $N$ is sufficiently large. 
	
	For these $D$,  rank $E_D \geq$ $2$ rank $End(E_0)$.
	
	Since our ground field is of positive characteristic, $\End(E_0)$ has rank $2$ or $4$ which gives rank $4$ and $8$ for quadratic twists $E_D$.
\end{proof}

	From the geometric interpretation of L-functions, we know $L(1/2,\chi_D)=0$ if and only if there exists a dominant map from the hyperelliptic curve defined by $C: y^2=D$ to $E_0$. Using the statements above, this condition is equivalent to the quadratic twist of the constant elliptic curve $E_0 \times \mathbb{F}_q(t)$ by $D$ having positive rank.
	
	More precisely, we have the following proposition.

	\begin{Proposition}
		\label{prop:rankgivezeros}
		For $q$ a square, let $E_0$ be an elliptic curve which admits $\sqrt{q}$ as a Frobenius eigenvalue.
		
		Denote by $E$ the base change of $E_0$ to the function field $\mathbb{F}_q(t)$.
		
		Denote by $E_D$ be the quadratic twist of $E$ by $D$ where $D$ is a squarefree polynomial in $\mathbb{F}_q[t]$.
		
		Recall the definitions
		\begin{align*}
		P(N)& = \{ D \in \mathbb{F}_q[t] : D \text{ monic, squarefree, } |D|<N \}\\
		g(N)& = \{ D \in P(N) : L(1/2, \chi_D) = 0 \}
		\end{align*} 
		
		Let $R_m(N)$ be the set $\{ D\in P(N): \text{rank } E_D \ge m \}$.
		
		Then $g(N)= R_2(N)$.

	\end{Proposition}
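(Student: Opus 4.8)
The strategy is to prove that, for $D\in P(N)$, the equality $L(1/2,\chi_D)=0$ holds if and only if $\rank E_D(\mathbb{F}_q(t))\geq 2$; since $g(N)$ and $R_2(N)$ are carved out of the common set $P(N)$ by these two conditions, this yields $g(N)=R_2(N)$. Throughout write $C$ for the hyperelliptic curve $y^2=D$. The first step is to identify $E_0$ with a representative of the isogeny class $A_q$. Because $q$ is a square, $\sqrt q\in\mathbb{Z}$, and the hypothesis that $\sqrt q$ is a Frobenius eigenvalue of the elliptic curve $E_0$ forces the characteristic polynomial of geometric Frobenius on $E_0$ to be $(x-\sqrt q)^2$; in particular its trace $2\sqrt q$ is divisible by $p$, so $E_0$ is supersingular, and since $\End(E_0)$ has no nonzero nilpotents (its rationalization is a division algebra) the Cayley--Hamilton relation $(\mathrm{Frob}_{E_0}-[\sqrt q])^2=0$ gives $\mathrm{Frob}_{E_0}=[\sqrt q]$. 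As $E_0$ is simple with $\sqrt q$ as a Frobenius eigenvalue, Theorem \ref{HondaTate} shows $E_0$ lies in the class $A_q$. Now Proposition \ref{map} shows $L(1/2,\chi_D)=0$ iff $C$ admits a nontrivial map to $A_q$; since $A_q$ is simple, this is equivalent (via the Albanese morphism of $C$, exactly as in the proof of Proposition \ref{map}, and using Lemma \ref{geometry} and Proposition \ref{subvariety}) to the existence of a nonzero homomorphism $J(C)\to A_q$, i.e., fixing an isogeny $A_q\sim E_0$, to $\Hom_{\mathbb{F}_q\text{-}\mathrm{av}}(J(C),E_0)\neq 0$.

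Next I would express $\rank E_D(\mathbb{F}_q(t))$ in the same terms. Set $L=\mathbb{F}_q(t)(\sqrt D)$, the function field of $C$. Since $\mathrm{char}\,\mathbb{F}_q\neq 2$, the standard analysis of quadratic twists identifies $E(\mathbb{F}_q(t))\otimes\mathbb{Q}$ and $E_D(\mathbb{F}_q(t))\otimes\mathbb{Q}$ with the $(+1)$- and $(-1)$-eigenspaces of $\mathrm{Gal}(L/\mathbb{F}_q(t))$ acting on $E(L)\otimes\mathbb{Q}$, so $\rank E_D(\mathbb{F}_q(t))=\rank E(L)-\rank E(\mathbb{F}_q(t))$. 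Since $E=E_0\times_{\mathbb{F}_q}\mathbb{F}_q(t)$ is constant, the isomorphism $E_K(K)\cong\Mor_{\mathbb{F}_q}(C',E_0)$ recalled earlier in this section, applied with $C'=\mathbb{P}^1$, shows $E(\mathbb{F}_q(t))\cong E_0(\mathbb{F}_q)$ (any morphism from $\mathbb{P}^1$ to an abelian variety is constant), hence $\rank E(\mathbb{F}_q(t))=0$. Applying the same isomorphism with $C'=C$, together with the companion identification $E_K(K)/E_K(K)_{\mathrm{tor}}\cong\Hom_{\mathbb{F}_q\text{-}\mathrm{av}}(J(C),E_0)$, gives $\rank E(L)=\rank_{\mathbb{Z}}\Hom_{\mathbb{F}_q\text{-}\mathrm{av}}(J(C),E_0)$. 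Hence $\rank E_D(\mathbb{F}_q(t))=\rank_{\mathbb{Z}}\Hom_{\mathbb{F}_q\text{-}\mathrm{av}}(J(C),E_0)$.

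The last step is the observation that this rank is either $0$ or at least $4$, and in particular never equal to $1$. Since $\mathrm{Frob}_{E_0}=[\sqrt q]$ is central, $\End_{\mathbb{F}_q}(E_0)=\End_{\overline{\mathbb{F}}_q}(E_0)$, which for the supersingular curve $E_0$ is a maximal order in a quaternion algebra, of $\mathbb{Z}$-rank $4$. If $\Hom_{\mathbb{F}_q\text{-}\mathrm{av}}(J(C),E_0)$ contains a nonzero element $f$, then $f$ is surjective because $E_0$ is simple, so $\alpha\mapsto\alpha\circ f$ embeds $\End_{\mathbb{F}_q}(E_0)\otimes\mathbb{Q}$ into $\Hom_{\mathbb{F}_q\text{-}\mathrm{av}}(J(C),E_0)\otimes\mathbb{Q}$, forcing the rank to be at least $4$. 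Combining the three steps, $L(1/2,\chi_D)=0$ iff $\Hom_{\mathbb{F}_q\text{-}\mathrm{av}}(J(C),E_0)\neq 0$ iff $\rank E_D(\mathbb{F}_q(t))\geq 1$ iff $\rank E_D(\mathbb{F}_q(t))\geq 2$, which proves $g(N)=R_2(N)$ (indeed $g(N)=R_2(N)=R_4(N)$).

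I expect the only genuinely delicate point to be the identification $E_0\sim A_q$ and the resulting fact that the Frobenius of $E_0$ is multiplication by an integer, from which the quaternionic structure of $\End_{\mathbb{F}_q}(E_0)$ and hence the rank jump follow; the remainder is an assembly of the geometric dictionary of Section 2, the results of Ulmer recalled in Section 4, and the elementary theory of quadratic twists over a field of odd characteristic. One small technical point worth checking is that $C\colon y^2=D$ carries an $\mathbb{F}_q$-rational point, so that $\Mor_{\mathbb{F}_q}(C,E_0)$ modulo constant maps is identified with $\Hom_{\mathbb{F}_q\text{-}\mathrm{av}}(J(C),E_0)$ through the Albanese morphism; this holds because $D$ is monic, so $C$ has a rational point above $t=\infty$ (a rational Weierstrass point when $\deg D$ is odd).
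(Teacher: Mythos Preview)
Your proof is correct and follows essentially the same route as the paper's: translate $L(1/2,\chi_D)=0$ into $\Hom_{\mathbb{F}_q\text{-av}}(J(C),E_0)\neq 0$ via the results of Section~2, compute $\rank E_D$ as $\rank_{\mathbb{Z}}\Hom_{\mathbb{F}_q\text{-av}}(J(C),E_0)$ via the twist decomposition and Corollary~4.3, and conclude using the size of $\End_{\mathbb{F}_q}(E_0)$. You are in fact more careful than the paper in justifying $E_0\sim A_q$ and in observing that $\End_{\mathbb{F}_q}(E_0)$ has $\mathbb{Z}$-rank $4$ (not merely $\geq 2$), which yields the sharper $g(N)=R_4(N)$; the paper's argument uses only the weaker bound $\rank\End_{\mathbb{F}_q}(E_0)\geq 2$ and the isogeny decomposition $J(C)\sim E_0^m\times A$ in place of your direct module-theoretic embedding.
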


	\begin{proof}
		Let $D$ be a monic, squarefree polynomial in $\mathbb{F}_q[t]$ and $C$ the hyperelliptic curve defined by $y^2 = D$. Let $K$ be the function field $k(C)$ of $C$ and $E_K=E_0 \times_k K$.
		
		Let $J(C)$ be the Jacobian of $C$. Assume that $J(C)$ is isogenous to $E_0^m \times A$ over $k$ where $A$ is an Abelian variety admitting no nonzero morphisms to $E_0$.
		
		Then by Corollary 4.3, $E_K(K)/E_K(K)_{tor} \simeq (\End(E_0))^m$. Since $E_0$ is defined over $\mathbb{F}_q$, rank $\End(E_0)$ is at least 2; we conclude that rank $E_K \ge 2m$.
		
		 We have rank $E_K(K)$ $=$ rank $E_D(\mathbb{F}_q(t))$ $+$ rank $E(\mathbb{F}_q(t))$. But rank $E(\mathbb{F}_q(t))$ is always $0$ since $\mathbb{F}_q(t)$ is the function field of $\mathbb{P}^1$ and there is no nonconstant map from $\mathbb{P}^1$ to an elliptic curve. 
		 
		 Thus rank $E_D$ $=$ rank $E_K \ge 2m$ .
		 
		 As was studied before, $L(1/2,\chi_D) = 0$ if and only if $C$ admits a dominant map to $E_0$. This is equivalent to $m > 0$ and rank $E_D \geq 2$.
	\end{proof}
	
	Thus, by Prop 4.6, results on quadratic characters can be used to give lower bounds on the number of elliptic curves with rank $\geq 2$ in quadratic twist families of constant elliptic curves.
	
	There are lots of heuristics and results on the study of ranks of elliptic curves in a quadratic twist family  over number fields (\cite{Melanie}, \cite{Mazur}). For example, with a fixed $E/ \mathbb{Q}$, let $d$ range over fundamental discriminants in $\mathbb{Z}$. Define set
	\[ N(X) = \{ d<X: \text{rank}(E_d) \geq 2 \text{ and is even }\} \]
	Then it is conjectured by Sarnak that 
	\[ \left|N(X)\right| = X^{3/4+o(1)} \]
	Following Katz-Sarnak philosophy, Conrey, Keating, Rubinstein, and Snaith ~\cite{CKRS} made the previous conjecture more precise. They conjectured that there exist constants $c_E$, $e_E$ such that
	\[ \left|N(X)\right| = (c_E+o(1))X^{3/4}(\ln(X))^{e_E} \] 
	Gouv\^ea and Mazur~\cite{Gouvea} proved under the parity conjecture, for any $\epsilon>0$, there exists a constant $X_{\epsilon}$ such that for all $X \geq X_{\epsilon}$,
	\[ \left|N(X)\right| > X^{1/2 - \epsilon}  \]
	In the same spirit, Karl Rubin and Alice Silverberg~\cite{Rubin} showed unconditionally that if either
	
	\begin{itemize}
		\item $E[2]$ has a non-trivial Galois equivariant automorphism and $\End_{\mathbb{C}}(E) \ne \mathbb{Z}[i]$, or
		\item $E$ has a rational subgroup of odd prime order $p$ and $\mathbb{Z}[\sqrt{-p}] \nsubseteq \End_{\mathbb{C}}(E)$.
	\end{itemize}
	one has, for $X \gg 1$,
	\[ \left| \left\{ d<X: \text{rank}(E_d) \geq 2  \right\} \right| \gg X^{1/3} \]
	They also showed the existence of a family of elliptic curves $E$ over $\mathbb{Q}$ such that
	\[ \left| \left \{ d<X: \text{rank}(E_d) \geq 3 \right \}\right| \gg X^{1/6} \]
	Goldfeld\cite{Goldfeld} conjectures that the average rank of quadratic twists of an elliptic curve is $1/2$ , to be more precise, 
	\[ \lim\limits_{X \rightarrow \infty}  \frac{\sum_{|d|<X} \text{rank}(E_d)}{\left| \left \{d:|d|<X, \text{squarefree} \right \}\right|} = \frac{1}{2} \]
	What underlies this conjecture is a widely held belief that $50 \%$ of the elliptic curves have rank $0$ and $50 \%$ have rank $1$. This is a combination of parity principle and minimalist philosophy.
	
	In our case, the parity principle does not apply, since all the quadratic twists in our family have even rank. 
	
	Thus, we don't expect the average rank of this family to approach $1/2$. But still, we would expect minimalist philosophy which means $0 \%$ of elliptic curves in this family have rank $\ge 2$. 
	
	And this expectation is supported by Bui and Florea's result mentioned in the first section for the odd degree case.
	
	\begin{Corollary}
		For $q$ a square, let $E$ be an elliptic curve over $\mathbb{F}_q$ where $$L(s,E) = 1-sq^{1/2-s} + q^{1-2s}.$$
		Let $$P'(g) = \{ D\in \mathbb{F}_q[t]: \text{ monic,} \text{ squarefree,} \text{ of odd degree, } \deg D \leq 2g+1 \}.$$
		$$R'(g) = \{ D\in P'(g): E_D \text{ has rank } 0 \}.$$
		Then $$ \lim\limits_{g \rightarrow \infty} \frac{|R'(g)|}{|P'(g)|} \geq 0.9427 \cdots + o(1) .$$
	\end{Corollary}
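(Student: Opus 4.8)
The plan is to combine Proposition~\ref{prop:rankgivezeros} with the Bui--Florea upper bound quoted in the introduction. The hypothesis on $L(s,E)$ is precisely the statement that $q^{1/2}$ is an eigenvalue of geometric Frobenius on $E$, so $E$ represents the isogeny class of $A_q$ and Proposition~\ref{prop:rankgivezeros} applies with $E_0=E$. It yields, for every monic squarefree $D\in\mathbb{F}_q[t]$, the equivalence $\rank E_D\ge 2\iff L(1/2,\chi_D)=0$. Moreover the proof of that proposition shows that, writing $C:y^2=D$ and decomposing $J(C)$ up to $\mathbb{F}_q$-isogeny as $E_0^m\times A$ with $A$ admitting no nonzero morphism to $E_0$, one has $\rank E_D=m\cdot\rank\End(E_0)$; this is an even integer, and it vanishes exactly when $m=0$. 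Hence $\rank E_D=0$ is equivalent to $L(1/2,\chi_D)\ne 0$, so that, setting $V(g):=\{D\in P'(g):L(1/2,\chi_D)=0\}$,
\[ R'(g)=P'(g)\setminus V(g). \]

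It remains to estimate $|V(g)|$. The set $P'(g)$ is the disjoint union, over $0\le j\le g$, of the sets of monic squarefree polynomials of degree $2j+1$, of which there are $q^{2j}(q-1)$; hence $|P'(g)|$ is dominated by its top term, $|P'(g)|=q^{2g+1}(1-q^{-1})\bigl(1+O(q^{-2})\bigr)$. The Bui--Florea theorem asserts that among the monic squarefree polynomials of degree $2j+1$ at most a proportion $0.0573\cdots+o_j(1)$ satisfy $L(1/2,\chi_D)=0$ as $j\to\infty$. Applying this in each degree and absorbing the geometrically decaying contributions of the lower degrees $2j+1<2g+1$ into an $o(1)$, I obtain $|V(g)|\le\bigl(0.0573\cdots+o(1)\bigr)|P'(g)|$ as $g\to\infty$. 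Dividing through,
\[ \frac{|R'(g)|}{|P'(g)|}=1-\frac{|V(g)|}{|P'(g)|}\ge 0.9427\cdots+o(1), \]
the constant $0.9427\cdots$ being $1$ minus the Bui--Florea constant; this is the asserted bound.

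Every step is routine given the two cited inputs; the nearest thing to an obstacle is bookkeeping. The two points that merit care are: checking that $\rank E_D$ is always even, so that inside $P'(g)$ the condition ``$\rank E_D=0$'' is exactly the negation of ``$\rank E_D\ge 2$'' (immediate from the isogeny decomposition recalled above); and passing from the fixed-degree form of the Bui--Florea bound to the set $P'(g)$, which collects polynomials of all odd degrees up to $2g+1$ rather than just of degree $2g+1$---this is handled by the geometric decay of the lower-degree counts, making their total contribution negligible against the top degree.
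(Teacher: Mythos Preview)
Your proof is correct and follows exactly the paper's approach, which is simply to combine Proposition~\ref{prop:rankgivezeros} with Corollary~2.1 of \cite{BF}; you have merely spelled out the bookkeeping (evenness of $\rank E_D$, and the geometric decay of lower-degree contributions) that the paper leaves implicit in its one-line citation.
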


	\begin{proof}
		This follows from Prop~\ref{prop:rankgivezeros} and Corollary 2.1 of \cite{BF}.
	\end{proof}
	
	\section{Proof of the Main Theorem}
	
	In this section, we will use Prop 3.1 as our main tool to prove the three statements of Theorem 1.2.
	
	\begin{proof}[Proof of Theorem 1.2 (1)]
		Following the theorem of Honda--Tate, when $q$ is a square, the simple Abelian varieties defined over $\mathbb{F}_q$ with $q^{1/2}$ being a Frobenius eigenvalue are elliptic curves. We will pick one such curve and call it $E$ with a Weierstrass form. 
		
		When $q$ is a square, $C : y^2 = D$ admits a dominant map to $E$ if and only if $L(1/2, \chi_D) = 0$.
		
		By Proposition 3.1, since $E$ has genus $1$ with an odd defining equation, for any $\epsilon>0$, there are at least $B_\epsilon N^{1/2-\epsilon}$ polynomials with $|D|<N$ satisfying the condition where $B_\epsilon$ is a nonzero constant.
		
		So we get for polynomials $D \in \mathbb{F}_q[t]$ with $|D|<N$, for any $\epsilon>0$, there are at least $B_\epsilon N^{1/2-\epsilon}$ which have the property that $L(1/2, \chi_D) = 0$ for $N$ large.
	
	\end{proof}

	\begin{proof}[Proof of Theorem 1.2 (2)]
		When $q$ is not a square, the simple $\mathbb{F}_q$ Abelian varieties with $q^{1/2}$ as a Frobenius eigenvalue form an isogeny class of Abelian surfaces. They are exactly the Weil restriction of scalars of the class of elliptic curves defined over $\mathbb{F}_{q^2}$ which have $q$ as a Frobenius eigenvalue. 
		
		By results of Howe, Nart and Ritzenthaler~\cite{Jacobian}, for all $q>3$, there is an abelian variety $A_q$ having $\sqrt{q}$ as a Frobenius eigenvalue which is the Jacobian of a smooth genus-2 curve. It will play the same role in this section as the elliptic curve $E$ for the case when $q$ is a square.
		
		Now in this case, we still have that for a polynomial $D \in \mathbb{F}_q[x]$ to have $L(1/2, \chi_D) = 0$, $A_q$ is isogenous over $\mathbb{F}_q$ to a subabelian variety of the Jacobian of curve $C$ given by $y^2 = D$.  
		
		Unlike the previous case, a map $J(C) \rightarrow A_q$ won't induce a map from $C$ to $C_0$. 
		
		However, the existence of a map $C \rightarrow C_0$ would guarantee $J(C_0) = A_q$ to be isogenous to a subabelian variety of $J(C)$. 
		
		In order to use Prop 3.1, we need $C_0$ to have a defining equation of the form $y^2=f(x)$ where $\deg f = 6$ and $f$ is reducible.
		
		We will show that $C_0$ has such an equation for all $q$; that is, for each $q$ and each $C_0$ whose Jacobian is isogenous to $A_q$, the $q$-th Frobenius doesn't act transitively on the Weierstrass points of $C_0$. 
		
		Denote the roots of $f$ by $x_1, \ldots, x_6$. Then the $2$-torsion group of $J(C_0)$ is generated by divisors $(x_1,0)-(x_i,0)$ where $i=2,3,4,5$. Thus, using this basis, from the action on $x_1, \ldots, x_6$, we get the matrix representation of the Frobenius action on $J(C_0)[2] \simeq (\mathbb{F}_2)^4$. If Frobenius acts on the roots transitively, then the characteristic polynomial of the action on the $\mathbb{F}_2$ vector space is $x^4+x^2+1$.
		
		Since we know the characteristic polynomial of Frobenius acting on the Tate module of $J(C_0)$ is $x^4-2qx^2+q^4$, we see that the action of Frobenius on $J(C_0)[2]$ has characteristic polynomial $x^4+1$. Thus Frobenius doesn't act transitively on the Weierstrass points.
		
		Since Frobenius doesn't act transitively on the Weierstrass points of $C_0$, it has a defining equation of the form $y^2=f(x)$ where $f$ is not irreducible. By applying Proposition 3.1, for any $\epsilon>0$, there are at least $B_\epsilon N^{1/3-\epsilon}$ polynomials with $|D|<N$ with the curve defined by $y^2=D$ admitting a dominant map to $C_0$ where $B_\epsilon$ is a nonzero constant.
		
		We thus conclude that $g(N)$ is at least $B_\epsilon N^{1/3-\epsilon}$ for $N$ large.
	\end{proof}

	\begin{proof}[Proof of Theorem 1.2(3)]
		We used Magma to go through all hyperelliptic curves defined by monic squarefree polynomial over $\mathbb{F}_3$ and found that the curve $C$ defined by $y^2 = x(x^8-1)$ admits $\sqrt{3}$ as a Frobenius eigenvalue. Since $C$ has an odd defining equation and is of genus $4$, by applying Proposition 3.1, we conclude for any $\epsilon>0$, at least $B_\epsilon N^{1/5-\epsilon}$ hyperelliptic curves admit a dominant map to $C$ where $B_\epsilon$ is a nonzero constant and $N$ is large. 
	\end{proof}
	
	\section{Data and Remarks}
	To get a direct view of our main problem, we used Magma to list all monic squarefree polynomials up to a certain degree over some finite fields and evaluate the L-functions corresponding to the hyperelliptic curves defined by these polynomials at the central point to get a count on the ones with value $0$. We have listed the data over fields $\mathbb{F}_5$ and $\mathbb{F}_9$ in the following tables. For field $\mathbb{F}_3$, there was only one curve of genus $4$ given by a degree $9$ defining equation found during the enumeration for polynomials of degree up to $12$.
	
	In the following tables, the first column is the degree $d$ of polynomials. Second column is the number of polynomials of degree $d$ whose corresponding L-function vanishes at $s=1/2$. The set of such polynomials is denoted as $g'(q^d)$. Note that the set $g(q^d)$ studied in the paper is the union of $g'(q^k)$ for all $k \le d$. The third column lists the total number of degree $d$ monic squarefree polynomials. The last column is the value ${log(g'(q^d))}/{log(q^d-q^{d-1})}$. By our main theorem, it has a $\liminf$ of at least $1/3$ for $\mathbb{F}_5$ and $1/2$ for $\mathbb{F}_9$ as $d \rightarrow \infty$.

		\begin{center}
	\begin{tabular}{  |c|r|c|c|}
		\hline
		\multicolumn{4}{|c|}{$\mathbb{F}_5$}\\
		\hline
		Degree $d$ &$|g'(5^d)|$ &$5^d-5^{d-1}$ & $\frac{log(|g'(5^d)|)}{log(5^d-5^{d-1})}$ \\
		\hline
		3  &  0& 100 & \\
		4  & 0& 500 &  \\
		5  & 1&2500&0  \\
		6  & 0&12500&  \\
		7  & 10&62500& 0.2085 \\
		8  & 5&312500&  0.1272  \\
		\hline
	\end{tabular}
\end{center}

For degree $9$ and $10$, due to the large number of monic squarefree polynomials, we randomly sampled 5000000 data points for each and got the following data.  The sample set is denoted by $S$. If we estimate the density $|g'(5^d)|/(5^d-5^{d-1})$ to be equal to the same density $|S \cap g'(5^d)|/|S|$, then we get an approximation for $\frac{log(|g'(5^d)|)}{log(5^d-5^{d-1})}$ which was put in the last column.

	\begin{center}
	\begin{tabular}{  |c|r|c|c|}
		\hline
		\multicolumn{4}{|c|}{$\mathbb{F}_5$}\\
		\hline
		Degree $d$ &$ |S \cap g'(5^d)|$ & $|S|$ & $\frac{log(|g'(5^d)|)}{log(5^d-5^{d-1})}$ \\
		\hline
		9  & 317&5000000& 0.3222 \\
		10 & 89&5000000&0.3109  \\
		\hline
	\end{tabular}
\end{center}

Over $\mathbb{F}_5$, we see there exists a genus $2$ curve defined by a degree $5$ polynomial with Frobenius eigenvalue $\sqrt{5}$. This polynomial is $x(x^4-1)$. Unlike hyperelliptic curves defined over larger fields, this curve doesn't have an even degree model. That explains why there is no quadratic character with conductor $5^6$ whose L-function vanishes at $s=1/2$.

		\begin{center}
	\begin{tabular}{  |c|r|c|c| }
		\hline
		\multicolumn{4}{|c|}{$\mathbb{F}_9$}\\
		\hline
		Degree $d$ &$|g'(9^d)|$ &$9^d-9^{d-1}$ & $\frac{log(|g'(9^d)|)}{log(9^d-9^{d-1})}$ \\
		\hline
		3  & 6    &  648 & 0.2768\\
		4  & 18    &  5832 & 0.3333 \\
		5  & 216   &  52488  &0.4946 \\
		6  & 180   &  472392&  0.3975 \\
		7  & 8658 &   4251528 & 0.5940  \\
		\hline
	\end{tabular}
\end{center}

Similarly,for degree $8$, $9$ and $10$, 5000000 data points for each were randomly sampled and we got the following data. The last column is the approximation gotten the same way as the case of field $\mathbb{F}_5$ listed above.

\begin{center}
	\begin{tabular}{  |c|r|c|c|}
		\hline
		\multicolumn{4}{|c|}{$\mathbb{F}_9$}\\
		\hline
		Degree $d$ &$ |S \cap g'(9^d)|$ & $|S|$ & $\frac{log(|g'(9^d)|)}{log(9^d-9^{d-1})}$\\
		\hline
		8 & 2660&  5000000&  0.5682 \\
		9 &3262&  5000000 & 0.6269 \\
		10 & 532 & 5000000 &  0.5814\\
		\hline
	\end{tabular}
\end{center}

From this table, we can see over $\mathbb{F}_9$, characters defined by odd degree polynomials are more likely to have their L-function vanish at $s=1/2$.	Thus, what this data tells us is that hyperelliptic curves defined over $\mathbb{F}_{p^2}$ with a Frobenius eigenvalue $p$ is more likely to have a rational Weierstrass point.

One explanation for this phenomenon is the observation that elliptic curves defined over $\mathbb{F}_{p^2}$ with Frobenius eigenvalues $p$ and $p$ have full $2$ torsion group over $\mathbb{F}_{p^2}$. This is because the $p$th Frobenius acts on Tate module $T_l$ by multiplication by $p$; thus, if $p \equiv 1 \mod l$ then the action is trivial on $E[l]$. And this is equivalent to $l$-torsion points being defined over the ground field.

Thus the elliptic curve $E$ we used in the proof of Theorem 1.1 part 1 is defined by $y^2 = x(x-1)(x-\lambda)$ where $\lambda \in \mathbb{F}_q$. And the hyperelliptic curves $C$ which admit a dominant map to $E$ have defining equations of the form $y^2 = F(x) = u(x)(u(x)-v(x))(u(x)-\lambda v(x))v(x)$.

For $C$ to have a rational Weierstrass point is equivalent to $F(x)$ having a rational root. As we can see, instead of being a random polynomial over $\mathbb{F}_q$, $F(x)$ admits a factorization into four factors; this should increase the likelihood of its having an $\mathbb{F}_q$ rational root.

	\begin{bibdiv}
		\begin{biblist}
			
				\bib{Baig}{article}{
				author={Baig, Salman},
				author={Hall, Chris},
				title={Experimental data for Goldfeld's conjecture over function fields},
				journal={Exp. Math.},
				volume={21},
				date={2012},
				number={4},
				pages={362--374},
				issn={1058-6458},
				review={\MR{3004252}},
				doi={10.1080/10586458.2012.671638},
			}

				\bib{Mazur}{article}{
				author={Bektemirov, Baur},
				author={Mazur, Barry},
				author={Stein, William},
				author={Watkins, Mark},
				title={Average ranks of elliptic curves: tension between data and
					conjecture},
				journal={Bull. Amer. Math. Soc. (N.S.)},
				volume={44},
				date={2007},
				number={2},
				pages={233--254},
				issn={0273-0979},
				review={\MR{2291676}},
				doi={10.1090/S0273-0979-07-01138-X},
			}

			\bib{BF}{article}{
				author={Bui, H. M.},
				author={Florea, Alexandra},
				title={Zeros of quadratic Dirichlet L-functions in the hyperelliptic ensemble},
				journal={preprint, available on arXiv at http://arxiv.org/abs/1605.07092}
			}
			
			\bib{Chowla}{article}{
				author={Chowla, S.},
				title={The Riemann hypothesis and Hilbert's tenth problem},
				journal={Norske Vid. Selsk. Forh. (Trondheim)},
				volume={38},
				date={1965},
				pages={62--64},
				review={\MR{0186643}},
			}
		
			\bib{CKRS}{article}{
			author={Conrey, J. B.},
			author={Keating, J. P.},
			author={Rubinstein, M. O.},
			author={Snaith, N. C.},
			title={On the frequency of vanishing of quadratic twists of modular
				$L$-functions},
			conference={
				title={Number theory for the millennium, I},
				address={Urbana, IL},
				date={2000},
			},
			book={
				publisher={A K Peters, Natick, MA},
			},
			date={2002},
			pages={301--315},
			review={\MR{1956231}},
		}
			
				\bib{Tim}{article}{
				author={Dokchitser, Tim},
				title={Notes on the parity conjecture},
				conference={
					title={Elliptic curves, Hilbert modular forms and Galois deformations},
				},
				book={
					series={Adv. Courses Math. CRM Barcelona},
					publisher={Birkh\"auser/Springer, Basel},
				},
				date={2013},
				pages={201--249},
				review={\MR{3184338}},
				doi={10.1007/978-3-0348-0618-3\_5},
			}

	\bib{E}{article}{
		title={The Theorem of Honda and Tate},
		author={ Eisentr\"ager, K.},
		journal={In "Notes on complex multiplication",
			available at www.math.stanford.edu/∼conrad/.}
	}

		\bib{Goldfeld}{article}{
		author={Goldfeld, Dorian},
		title={Conjectures on elliptic curves over quadratic fields},
		conference={
			title={Number theory, Carbondale 1979},
			address={Proc. Southern Illinois Conf., Southern Illinois Univ.,
				Carbondale, Ill.},
			date={1979},
		},
		book={
			series={Lecture Notes in Math.},
			volume={751},
			publisher={Springer, Berlin},
		},
		date={1979},
		pages={108--118},
		review={\MR{564926}},
	}

			\bib{Gouvea}{article}{
			author={Gouv\^ea, F.},
			author={Mazur, B.},
			title={The square-free sieve and the rank of elliptic curves},
			journal={J. Amer. Math. Soc.},
			volume={4},
			date={1991},
			number={1},
			pages={1--23},
			issn={0894-0347},
			review={\MR{1080648}},
			doi={10.2307/2939253},
		}
	
	\bib{Honda}{article}{
		author={Honda, Taira},
		title={Isogeny classes of abelian varieties over finite fields},
		journal={J. Math. Soc. Japan},
		volume={20},
		date={1968},
		pages={83--95},
		issn={0025-5645},
		review={\MR{0229642}},
		doi={10.2969/jmsj/02010083},
	}	
		
			\bib{Jacobian}{article}{
			author={Howe, Everett W.},
			author={Nart, Enric},
			author={Ritzenthaler, Christophe},
			title={Jacobians in isogeny classes of abelian surfaces over finite
				fields},
			language={English, with English and French summaries},
			journal={Ann. Inst. Fourier (Grenoble)},
			volume={59},
			date={2009},
			number={1},
			pages={239--289},
			issn={0373-0956},
			review={\MR{2514865}},
		}
		
	\bib{Mumford}{book}{
		author={Mumford, David},
		title={Abelian varieties},
		series={Tata Institute of Fundamental Research Studies in Mathematics},
		volume={5},
		note={With appendices by C. P. Ramanujam and Yuri Manin;
			Corrected reprint of the second (1974) edition},
		publisher={Published for the Tata Institute of Fundamental Research,
			Bombay; by Hindustan Book Agency, New Delhi},
		date={2008},
		pages={xii+263},
		isbn={978-81-85931-86-9},
		isbn={81-85931-86-0},
		review={\MR{2514037}},
	}

		\bib{Melanie}{article}{
			author={Park, Jennifer},
			author={Poonen, Bjorn},
			author={Voight, John},
			author={Matchett Wood, Melanie},
			title={A heuristic for boundedness of ranks of elliptic curves},
			journal={preprint, available on arXiv at https://arxiv.org/abs/1602.01431}
		}

		\bib{Poonen}{article}{
			author={Poonen, Bjorn},
			title={Squarefree values of multivariable polynomials},
			journal={Duke Math. J.},
			volume={118},
			date={2003},
			number={2},
			pages={353--373},
			issn={0012-7094},
			review={\MR{1980998}},
			doi={10.1215/S0012-7094-03-11826-8},
		}
	
		\bib{Rubin}{article}{
		author={Rubin, Karl},
		author={Silverberg, Alice},
		title={Rank frequencies for quadratic twists of elliptic curves},
		journal={Experiment. Math.},
		volume={10},
		date={2001},
		number={4},
		pages={559--569},
		issn={1058-6458},
		review={\MR{1881757}},
	}
		
		\bib{ Soundararajan}{article}{
			author={Soundararajan, K.},
			title={Nonvanishing of quadratic Dirichlet $L$-functions at $s=\frac12$},
			journal={Ann. of Math. (2)},
			volume={152},
			date={2000},
			number={2},
			pages={447--488},
			issn={0003-486X},
			review={\MR{1804529}},
			doi={10.2307/2661390},
		}

		\bib{Tate}{article}{
		author={Tate, John},
		title={Endomorphisms of abelian varieties over finite fields},
		journal={Invent. Math.},
		volume={2},
		date={1966},
		pages={134--144},
		issn={0020-9910},
		review={\MR{0206004}},
		doi={10.1007/BF01404549},
	}

	\bib{Ulmer}{article}{
		author={Ulmer, Douglas},
		title={Elliptic curves over function fields},
		conference={
			title={Arithmetic of $L$-functions},
		},
		book={
			series={IAS/Park City Math. Ser.},
			volume={18},
			publisher={Amer. Math. Soc., Providence, RI},
		},
		date={2011},
		pages={211--280},
		review={\MR{2882692}},
	}

\end{biblist}
\end{bibdiv}
\end{document}